\newtheorem{theorem}{Theorem}
\newtheorem{proposition}{Proposition}
\newtheorem{corollary}{Corollary}
\newtheorem{lemma}{Lemma}
\theoremstyle{definition}
\newtheorem{remark}{Remark}
\newtheorem*{example}{Example}
\newcommand{\guillemets}[1]{``#1''}
\begin{document}

\title{Global Analysis of a Continuum Model \\ for Monotone Pulse-Coupled Oscillators \footnote{The authors are grateful to Jean-Michel Coron for numerous valuable comments and useful discussions about the manuscript. This work was supported by the Belgian Network DYSCO (Dynamical Systems, Control, and Optimization), funded by the Interuniversity Attraction Poles Programme, initiated by the Belgian State, Science Policy Office, and A. Mauroy was supported as an FNRS Fellow (Belgian National Fund for Scientific Research).}}
\author{Alexandre Mauroy and Rodolphe Sepulchre}

\maketitle
\begin{abstract}
We consider a continuum of phase oscillators on the circle interacting through an impulsive instantaneous coupling. In contrast with previous studies on related pulse-coupled models, the stability results obtained in the continuum limit are global. For the nonlinear transport equation governing the evolution of the oscillators, we propose (under technical assumptions) a global Lyapunov function which is induced by a total variation distance between quantile densities. The monotone time evolution of the Lyapunov function completely characterizes the dichotomic behavior of the oscillators: either the oscillators converge in finite time to a synchronous state or they asymptotically converge to an asynchronous state uniformly spread on the circle. The results of the present paper apply to popular phase oscillators models (e.g. the well-known leaky integrate-and-fire model) and draw a strong parallel between the analysis of finite and infinite populations. In addition, they provide a novel approach for the (global) analysis of pulse-coupled oscillators.
\end{abstract}
\thanks{A. Mauroy is with the Department of Mechanical Engineering, University of California Santa Barbara, Santa Barbara, CA 93106, USA (e-mail: alex.mauroy@engr.ucsb.edu). The work was completed while the author was with the Department of Electrical Engineering and Computer Science, University of Liège, B-4000 Liège, Belgium .}\\
\thanks{R. Sepulchre is with the Department of Electrical Engineering and Computer Science, University of Liège, B-4000 Liège, Belgium (e-mail: r.sepulchre@ulg.ac.be).}

\section{Introduction}

Networks of interacting agents are omnipresent in natural \cite{Buck,Reynolds,Walker} as well as in artificial systems \cite{Hopfield,Paley,Strogatz_net}. In spite of their apparent simplicity, they may exhibit rich and complex ensemble behaviors \cite{Strogatz_book} and have led to intense research during the last few decades. In this context, coupled phase oscillators are generic models of paramount importance when studying the collective behaviors of a large collection of systems \cite{Winfree}. 

Phase oscillators appear as reductions or approximations of (realistic) dynamical oscillator models. They are obtained through the computation of a phase response curve (PRC) \cite{Smeal, Winfree}, which characterizes the phase sensitivity of an oscillator to an external perturbation, such as the influence of the neighboring oscillators in the network. Since phase oscillators are characterized by a one-dimensional state-space $S^1(0,2\pi)$, they are more amenable to a formal mathematical study of the collective behaviors even though the nonlinear interactions between oscillators often yield mathematical puzzles \cite{Kuramoto, Vicsek}.  

Within the network, oscillators interact through a nonlinear coupling. In most studied models, the coupling has a permanent influence on the network. However, in many situations encountered in biology or physics, the oscillators influence the network only during a tiny fraction of their cycle (e.g. yeast cell dynamics \cite{Boczko}). It is particularly so when the interconnection between the agents consists in the emission of fast pulses (spiking neurons \cite{Gerstner}, cardiac pacemaker cells \cite{Peskin}, earthquakes dynamics \cite{Olami}, etc.). In this paper, we consider the limit of an impulsive and instantaneous coupling, where a (pulse-coupled) oscillator interacts with the network only when its phase is equal to a given value. When considering the popular leaky integrate-and-fire oscillators, this model corresponds to Peskin's model \cite{Peskin, Kuramoto3}.

For a finite number of identical oscillators, previous studies show that the global behavior of pulse-coupled oscillators is dichotomic (\cite{Mauroy,Mirollo}): the oscillators converge either to a synchronized state or to an anti-synchronized state. In the present paper, we extend the result to infinite populations, showing that the global behavior of infinite populations is also dichotomic and thereby highlighting the perfect \emph{parallel} between finite and infinite populations.

Several earlier studies have provided \emph{local} stability results for infinite populations of pulse-coupled oscillators (see e.g. \cite{Abbott,Bressloff2,De_Smet,Kuramoto3,Vreeswijk}). In contrast, we present in this paper \emph{global} stability results for infinite populations of monotone oscillators (including leaky integrate-and-fire oscillators). To this end, we introduce a Lyapunov function which is induced by a $L^1$ norm and which has the interpretation of a \emph{total variation distance between (quantile) density functions}. Modulo technical conditions detailed in the paper, we show that the time evolution of the proposed Lyapunov function is governed by the derivative of the PRC, a result that leads to a global convergence analysis for monotone PRC's. 

Beyond the analysis of monotone pulse-coupled oscillators, the theory developed in the paper leads to general results on nonlinear partial differential equations (PDE). The analysis focuses on transport equations with a monotone dynamics (derived from a monotone PRC) and provides existence, uniqueness, and global stability results for the stationary solution of the PDE. In particular, the use of a total variation distance as a strict Lyapunov function seems novel and specific to the impulsive nature of the coupling (the results in \cite{Serre, Temple} suggest that total variation distance is of little use for systems of conservation laws). In this context, the result could potentially open new avenues to connect the monotonicity property of an infinite dimensional system to its stability properties.

The paper is organized as follows. In Section \ref{model}, we derive the transport PDE for the continuum of pulse-coupled oscillators from the original model of pulse-coupled integrate-and-fire oscillators (Peskin's model). Section \ref{dicho} presents numerical experiments showing that the continuum model is characterized by a dichotomic behavior. In addition, existence and uniqueness results are obtained for the stationary solution of the PDE. In Section \ref{main_result}, a strict Lyapunov function is proposed, which is inspired from our previous work \cite{Mauroy} on finite populations. In Section \ref{well_posedness}, we perform the convergence analysis of populations of monotone oscillators. The parallel between finite and infinite populations, as well as some extensions of the model, are discussed in Section \ref{discussion}. Finally, the paper closes with some concluding remarks in Section \ref{conclusion}.

\section{A phase density equation for pulse-coupled oscillators}
\label{model}

In this section, we introduce models of (monotone) integrate-and-fire oscillators with an (instantaneous) impulsive coupling. The oscillators are equivalent to phase oscillators and, in the continuum limit, evolve according to a phase density equation. The derivation of the corresponding nonlinear PDE is standard and similar developments are found in \cite{Abbott,Brown,Kuramoto3}.

\subsection{Pulse-coupled integrate-and-fire oscillators}

We consider models of integrate-and-fire oscillators \cite{Knight}. An integrate-and-fire oscillator is described by a scalar state variable $x$, which monotonically increases between the two thresholds $\underline{x}$ and $\overline{x}$ according to the dynamics $\dot{x}=F(x)$, $F>0$. When the oscillator reaches the upper threshold $\overline{x}$, it is reset to the lower threshold $\underline{x}$ (it is said to \textsl{fire}).

In \cite{Peskin}, Peskin proposed to study the behavior of $N$ integrate-and-fire oscillators interacting through an impulsive coupling. Whenever an oscillator fires, it sends out a pulse which instantaneously increments the state of all other oscillators by a constant value $K/N$, where $K$ is the coupling strength. The coupling is usually excitatory ($K>0$) but may also be inhibitory ($K<0$). The dynamics of a pulse-coupled integrate-and-fire oscillator $k\in\{1,\dots,N\}$ is then given by 
\begin{equation}
\label{stat_dyn2}
\dot{x}_k=F(x_k)+u_k(t)
\end{equation}
with the coupling
\begin{equation}
\label{coupling_discret}
u_k(t)=\frac{K}{N}  \sum_{\substack{j=1\\j\neq k}}^N \sum_{l=0}^{\infty} \delta(t-t^{(j)}_l)\,.
\end{equation}
The Dirac functions $\delta$ model the pulses which increment the state of oscillator $k$ at the firing times $t^{(j)}_l$, that is, when an oscillator $j\neq k$ fires.

Peskin's model was initially proposed with the popular \textsl{leaky} integrate-and-fire (LIF) oscillators, characterized by the affine vector field $F(x)=S-\gamma\, x$, $\gamma>0$ \cite{Lapicque}. However, the results of the present paper apply to \textsl{monotone oscillators}, that we define as integrate-and-fire oscillators with a monotone increasing or monotone decreasing vector field ($dF/dx>0$ or $dF/dx<0$). The class of monotone oscillators embraces a large variety of models, including the popular LIF model.

\subsection{Phase oscillators}

Integrate-and-fire oscillators are equivalently modeled as phase oscillators if the state dynamics \eqref{stat_dyn2} is turned into a phase dynamics. The phase \mbox{$\theta\in S^1(0,2\pi)$} is determined from the state $x\in[\underline{x},\overline{x}]$ by rescaling in such a way that $\theta=0$ corresponds to the low threshold $x=\underline{x}$ ---~the oscillator fires at phase $\theta=0$~--- and in such a way that a single (uncoupled) oscillator has a constant phase velocity $\dot{\theta}=\omega$, where $\omega$ is the natural frequency of the oscillator. This leads to the state-phase relation
\begin{equation}
\label{state_phase}
\theta=\omega\int_{\underline{x}}^x \frac{1}{F(s)}\, ds\,.
\end{equation}
Under the influence of the coupling $u_k(t)$, the state dynamics \eqref{stat_dyn2} corresponds to the phase dynamics
\begin{equation}
\label{phase_model}
\dot{\theta}_k=\omega+Z(\theta_k)\, u_k(t)\triangleq v(\theta_k,t)\,,
\end{equation}
where the function $Z\in C^1([0,2\pi])$ is the infinitesimal phase response curve (PRC) of the oscillator, that is, the phase sensitivity of the oscillator to an infinitesimal perturbation \cite{Izi_book, Smeal, Winfree}. For integrate-and-fire oscillators, the PRC has the closed-form expression (see \cite{Brown})
\begin{equation}
\label{PRC}
Z(\theta)=\frac{\omega}{F(x(\theta))}\,.
\end{equation}
It follows from \eqref{PRC} that monotone oscillators are characterized by a monotone PRC: \mbox{$dF/dx>0$} $\forall x\in[\underline{x},\overline{x}]$ leads to $Z'<0$ $\forall \theta \in[0,2\pi]$ and $dF/dx<0$ $\forall x\in[\underline{x},\overline{x}]$ leads to $Z'>0$ $\forall \theta \in[0,2\pi]$, where $Z'$ denotes the derivative of $Z$ with respect to the phase $\theta$.

\subsection{Phase density equation}

In the limit of a large number of $N\rightarrow\infty$ oscillators, the \textsl{infinite} population is a continuum characterized by a (nonnegative, continuous) phase density function 
\begin{equation}
\nonumber
\rho(\theta,t) \in C^0([0,2\pi]\times \mathbb{R}^+;\mathbb{R}^+)
\end{equation}
that satisfies the normalization
\begin{equation}
\nonumber
\int_0^{2\pi} \rho(\theta,t)\,d\theta=1 \quad \forall t\,.
\end{equation}
The quantity $\rho(\theta,t)d\theta$ is the fraction of oscillators with a phase between $\theta$ and $\theta+d\theta$ at time $t$. The time evolution of the density obeys the well-known continuity equation
\begin{equation}
\label{cont_equa}
\frac{\partial}{\partial t} \rho(\theta,t)=-\frac{\partial}{\partial \theta}\left[v(\theta,t)\,\rho(\theta,t)\right]\,,
\end{equation}
where the function 
\begin{equation}
\label{rho_J}
v(\theta,t)\,\rho(\theta,t)\triangleq J(\theta,t)
\end{equation}
is the (nonnegative, continuous) flux $J(\theta,t) \in C^0([0,2\pi] \times \mathbb{R}^+;\mathbb{R}^+)$. The quantity $J(\theta,t) dt$ represents the fraction of oscillators flowing through phase $\theta$ between time $t$ and $t+dt$. Since the phase $\theta$ is defined on $S^1(0,2\pi)\equiv\mathbb{R} \bmod 2\pi$, the flux must satisfy the boundary conditions
\begin{equation}
\label{CL}
J(0,t)=J(2\pi,t)\triangleq J_0(t)\quad \forall t\,.
\end{equation}
For the sake of simplicity, we use in the sequel the notation $J_0$ to denote the boundary flux (\ref{CL}). Since the oscillators fire at phase $\theta=0$, $J_0(t)$ is also called the \textsl{firing rate} of the oscillators.

\subsection{Continuous impulsive coupling}

The impulsive coupling originally defined for finite populations is extended to infinite populations as follows. Since the coupling strength $K/N$ is inversely proportional to the number of oscillators, the coupling does not increase as the number of oscillators grows: the constant $K$ corresponds to the net influence of the whole population through the coupling, regardless of the number of oscillators. In the limit $N\rightarrow \infty$, the firing of each oscillator of the infinite population produces an infinitesimal spike of size $K/N \rightarrow 0$.

The impulsive coupling is best expressed in terms of the flux $J_0$. For a finite population, the oscillators crossing $\theta=0$ at times $t^{(j)}_l$ induce a (discontinuous) flux \mbox{$J_0(t)=\frac 1 N \sum_j \sum_l \delta(t-t^{(j)}_l)$}. In the limit $N\rightarrow\infty$, the influence of a single oscillator is negligible, so that comparing the flux $J_0$ with the coupling \eqref{coupling_discret} yields the coupling
\begin{equation}
\label{cont_coupling}
u(t)=K\, J_0(t)\,.
\end{equation}

Roughly, for infinite populations, the impulsive coupling is identical for all the oscillators and \emph{proportional to the firing rate} $J_0$. In addition, the coupling is a continuous-time function interpreted as an infinite sum of infinitesimal spikes that result from the uninterrupted firings of the continuum.\\

With the continuous impulsive coupling \eqref{cont_coupling}, the phase dynamics \eqref{phase_model} is rewritten as
\begin{equation}
\label{dynamics}
\dot{\theta}=v(\theta,t)=\omega+Z(\theta)\,K\,J_0(t)\,.
\end{equation}
The final PDE for the continuum of pulse-coupled oscillators is derived as follows. At $\theta=0$, \eqref{rho_J} and \eqref{dynamics} yield the relationship
\begin{equation}
\label{equa_ajout}
J_0(t)=v(0,t)\,\rho(0,t)=\left[\omega+K\,Z(0)\,J_0(t)\right] \rho(0,t)
\end{equation}
and the flux at the boundary is explicitly given by
\begin{equation*}
J_0(t)=\frac{\omega\,\rho(0,t)}{1-K\,Z(0)\,\rho(0,t)}\,.
\end{equation*}
The continuity equation (\ref{cont_equa}) thus leads to the nonlinear PDE for the density
\begin{equation}
\label{equa_fin}
\frac{\partial \rho(\theta,t)}{\partial t}=-\omega \frac{\partial \rho(\theta,t)}{\partial \theta}-\frac{K\,\omega\,\rho(0,t)}{1-K\,Z(0)\,\rho(0,t)} \frac{\partial}{\partial \theta}\left[Z(\theta)\,\rho(\theta,t)\right]\,.
\end{equation}
The boundary condition (\ref{CL}) is expressed in terms of density as
\begin{equation}
\label{CL2}
\frac{\rho(0,t)}{1-K\,Z(0)\,\rho(0,t)}=\frac{\rho(2\pi,t)}{1-K\,Z(2\pi)\,\rho(2\pi,t)}\,\left(=\frac{J_0(t)}{\omega}\right)\,.
\end{equation}
The reader will notice that, in contrast to the periodicity condition on the flux, no periodicity is assumed on the density $\rho(\theta,t)$. [In particular, $\rho(0,t)\neq\rho(2\pi,t)$ if $Z(0)\neq Z(2\pi)$.]

The density of the pulse-coupled oscillators evolves according to the nonlinear PDE \eqref{equa_fin} with the boundary condition \eqref{CL2}. The PDE is studied in detail in the rest of the paper, with a particular attention to the case of a monotone PRC. Observe that a monotone PRC implies that $Z(0)\neq Z(2\pi)$.

\section{A dichotomic behavior}
\label{dicho}

Finite populations of pulse-coupled monotone oscillators exhibit a dichotomic behavior: they converge either toward a synchronized state or toward an anti-synchronized state (see \cite{Mauroy,Mirollo}). Similarly, infinite populations are characterized by a dichotomic asymptotic behavior, that depends on the coupling sign ($K>0$ or $K<0$) and on the derivative $Z'$ (or equivalently $dF/dx$). This remarkable behavior is described in the present section through numerical experiments and intuitive arguments, motivating the theoretical global analysis in the next sections. The actual proof of the dichotomic behavior is postponed to Section \ref{well_posedness}.

\subsection{Asymptotic behavior}
\label{subsec_asymp_simu}

Without coupling ($K=0$), the last term of (\ref{equa_fin}) disappears and the PDE is a standard transport equation. Its solution is a rigid translation of the initial density $\rho(\theta,0)=\rho_0(\theta)$ with a constant velocity $\omega$, that is, a traveling wave $\rho(\theta,t)=\rho_0((\theta-\omega t)\bmod 2\pi)$. In this case, any solution is periodic (with period $2\pi/\omega$) and the system is marginally stable.

When the oscillators are coupled, the last term of (\ref{equa_fin}) modifies the transport equation. Under the influence of the coupling, the velocity depends on both time and phase and the density is thereby \guillemets{stretched} or \guillemets{compressed}. This is illustrated when computing the total time derivative along a characteristic curve $\Lambda(t)$ defined by $\dot{\Lambda}=v(\Lambda(t),t)$, that is
\begin{equation}
\label{tot_der}
\frac{\partial \rho}{\partial t}+\frac{\partial \rho}{\partial \theta}v\big(\Lambda(t),t\big)=\frac{d\rho}{dt}=-\rho\big(\Lambda(t),t\big)\, J_0(t)\, K \,Z'\big(\Lambda(t)\big)\,,
\end{equation}
where (\ref{equa_fin}) and (\ref{CL2}) have been used. The total derivative shows that the density is modified on a characteristic curve whenever the PRC $Z$ is not constant. In addition to the rigid translation, the density undergoes a nonlinear transformation, possibly leading to asymptotic convergence to a particular density function corresponding to a particular stationary organization of the oscillators. 

The total derivative (\ref{tot_der}) gives clear insight that the sign of the derivative $K\, Z'$ is of primary importance. In fact, the sign of $K\, Z'$ will enforce a dichotomic behavior. The condition $K\,Z'(\theta)<0$ (or $K\,dF/dx>0$) will be shown to enforce convergence to a \emph{uniform flux} $J(\theta,t)=J^*$ on $S^1(0,2\pi)$. This situation, corresponding to the maximal spreading of the oscillators on the circle, is called the \emph{asynchronous} state \cite{Abbott,Vreeswijk} (Figure \ref{asynchro}). In contrast, the reverse condition $K\,Z'(\theta)>0$ (or $K\,dF/dx<0$) will be shown to enforce convergence to a \emph{delta-like flux} (Figure \ref{synchro}). This situation, characterized by the synchronization of all the oscillators, is the \emph{synchronous state}.
\begin{figure}[h]
\begin{center}
\subfigure[]{\includegraphics[width=0.4\textwidth]{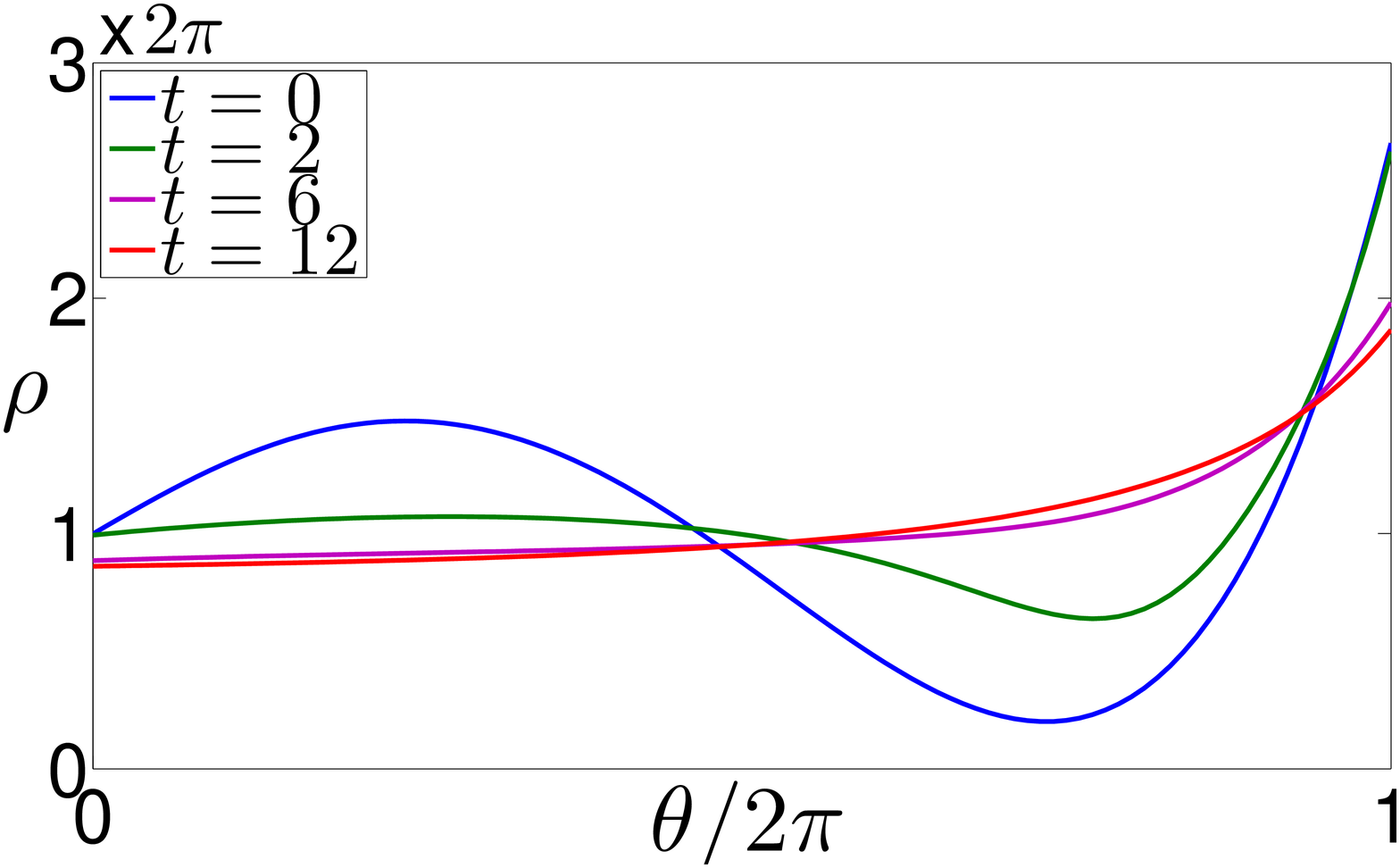}}
\subfigure[]{\includegraphics[width=0.4\textwidth]{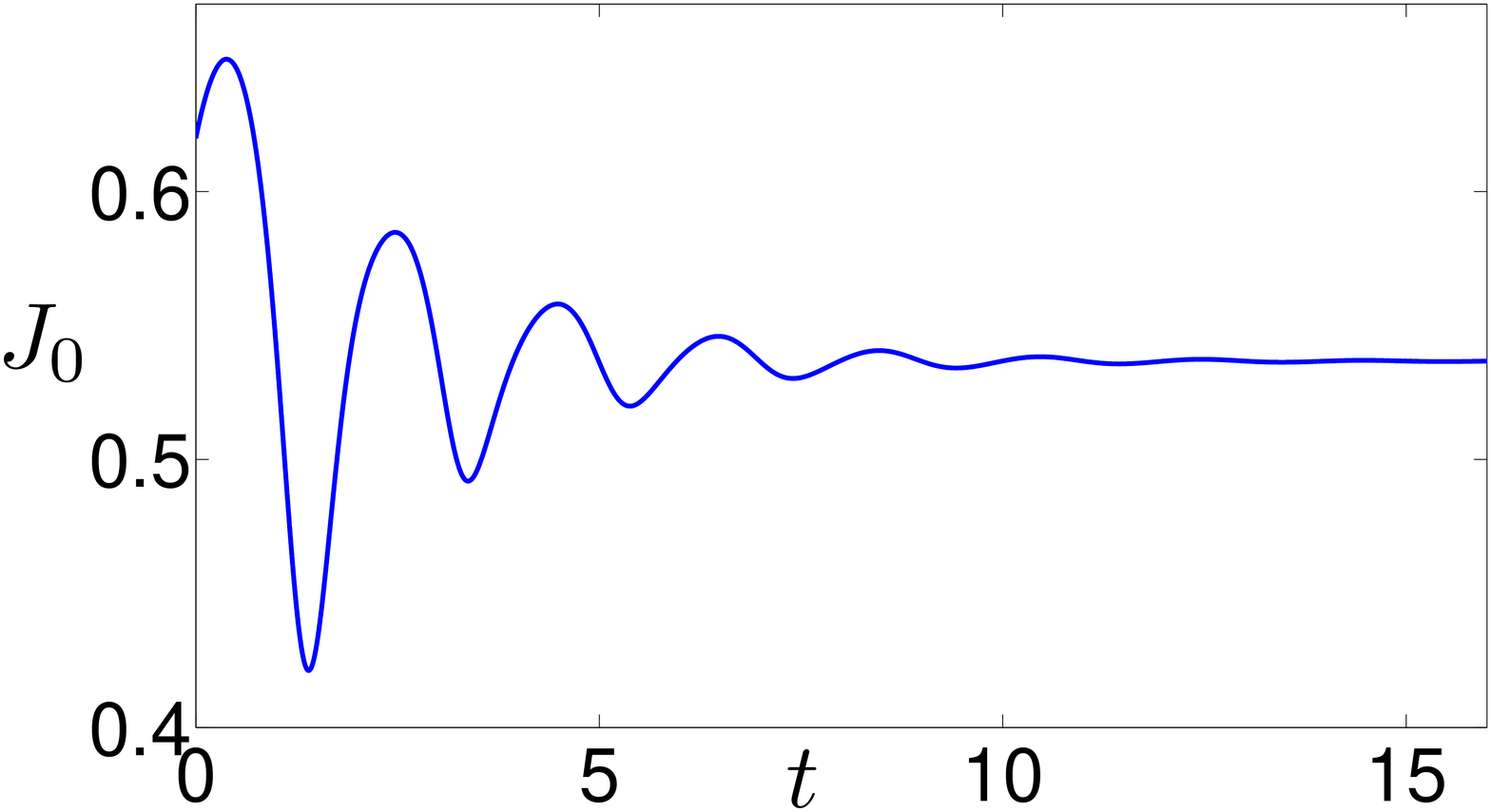}}
\caption{When $K\,Z'<0$ (or $K\,dF/dx>0$), the solution converges to the asynchronous state. With the monotone LIF dynamics $\dot{x}=2.1-2\,x$, $x\in[0,1]$ and with an inhibitory coupling $K=-0.1<0$, the function $K\,Z$ is monotone decreasing. (a) The density converges to a stationary solution $\rho^*$ and (b) the flux $J_0(t)$ tends to a constant value $J^*\approx 0.53$.}
\label{asynchro}
\end{center}
\end{figure}
\begin{figure}[h]
\begin{center}
\subfigure[]{\includegraphics[width=0.4\textwidth]{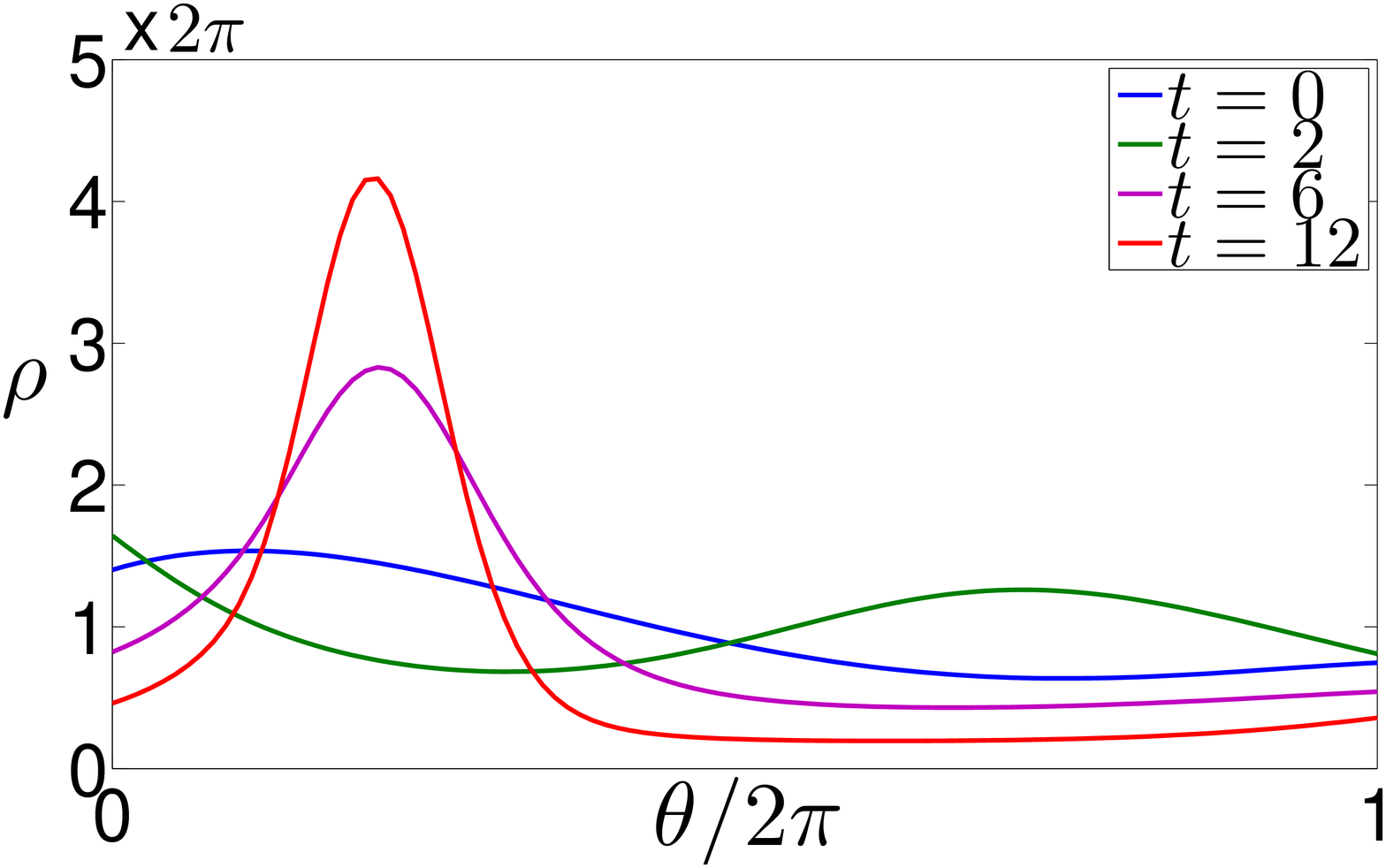}}
\subfigure[]{\includegraphics[width=0.4\textwidth]{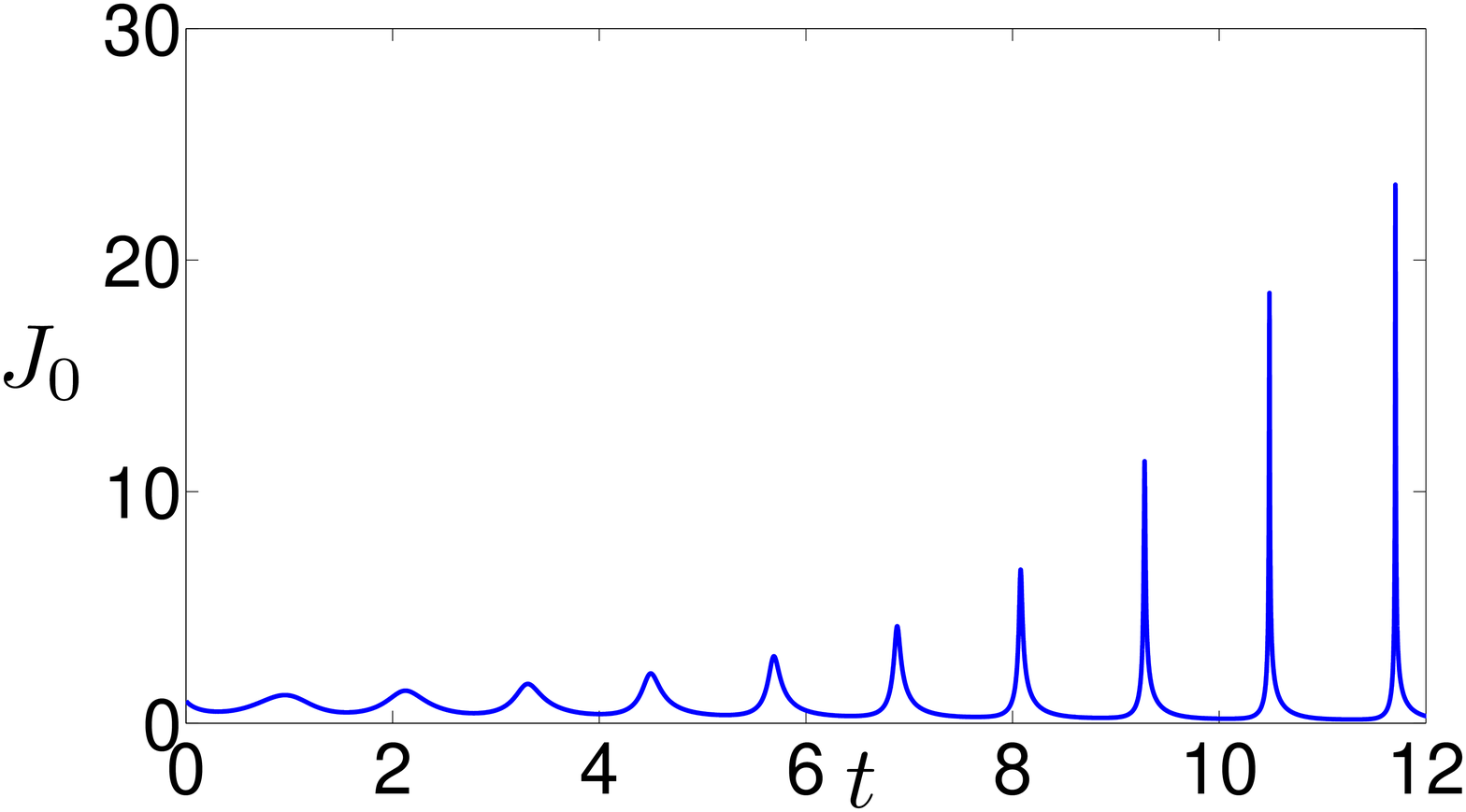}}
\caption{When $K\,Z'>0$ (or $K\,dF/dx<0$), the solution converges to the synchronous state. With the same LIF dynamics as in Fig. \ref{asynchro}, but with an excitatory coupling $K=0.1>0$, the function $K\,Z$ is monotone increasing. (a) The density converges to a synchronous solution and (b) the flux $J_0(t)$ tends to a Dirac function.}
\label{synchro}
\end{center}
\end{figure}

We remark that reversing the coupling sign ($K>0$ or $K<0$) has the same effect as reversing the monotonicity of $Z$ ($Z'<0$ or $Z'>0$).

The dichotomic asymptotic behavior is in agreement with the equivalent dichotomic behavior of finite populations: the synchronous and asynchronous states are the exact analog of the synchronous and \textsl{splay} states observed for finite populations. This is obvious in the case of synchronization, that occurs both for finite and infinite populations when $K\,dF/dx<0$ \cite{Mirollo}. Asynchronous state and splay state are also equivalent behaviors, that both occur when $K\,dF/dx>0$ \cite{Mauroy}. They are both anti-synchronized behaviors, for which the oscillators evenly spread over the circle $S^1(0,2\pi)$.


\subsection{Stationary asynchronous state}

We will now characterize the stationary solution of the PDE (\ref{cont_equa}), i.e. the asynchronous state corresponding to a constant flux $J(\theta,t)=J^*$. From (\ref{rho_J}), the stationary density must satisfy
\begin{equation}
\label{stat_density}
\rho^*(\theta)=\frac{J^*}{\omega+K\, Z(\theta)J^*}\,.
\end{equation}
The stationary asynchronous state thus exists if there exists a value $J^*>0$ so that the stationary density is nonnegative and bounded
\begin{equation}
\label{pos_density}
0\leq \frac{J^*}{\omega+K\, Z(\theta)J^*}<\infty \,,\quad \forall \theta \in [0,2\pi]\,,
\end{equation}
and normalized
\begin{equation}
\label{equa_E0}
\int_0^{2\pi}\frac{J^*}{\omega+K\, Z(\theta)J^*}d\theta=1\,.
\end{equation}
The condition (\ref{pos_density}) and the continuity of $Z$ imply that the stationary solution is continuous.

It is noticeable that for finite populations, the equivalent stationary splay state is a phase-locked configuration: at each firing, the $N$ oscillators are characterized by constant phases \mbox{$\mathbf{\Theta}^*=(\theta^*_1,\cdots,\theta^*_{N-1},\theta^*_N=2\pi$)} and fire at a constant rate. In particular, the value $\theta^*_{N-1}$ is related to the firing rate and is well-approximated by the stationary flux $J^*$ when $N\gg 1$.\\

The following proposition gives necessary and sufficient conditions to ensure the existence and uniqueness of a stationary solution $\rho^*\in C^0([0,2\pi];\mathbb{R}^+)$.
\begin{proposition}
\label{prop_E0}
A stationary flux $J^*>0$ satisfying the conditions (\ref{pos_density}) and (\ref{equa_E0}) exists if and only if the inequality
\begin{equation}
\label{cond2_E0}
\lim_{\begin{subarray}{c}s\rightarrow r \\
s>r \end{subarray}}\int_0^{2\pi}\frac{1}{K\, Z(\theta)+s}\,d\theta>1
\end{equation}
is satisfied with
\begin{equation}
\nonumber
r\triangleq\begin{cases}
0 & \textrm{if } K\, Z(\theta)\geq 0 \quad \forall \theta\in[0,2\pi]\, , \\
\left|\min_{\theta\in[0,2\pi]}\Big(K\, Z(\theta)\Big)\right| & \textrm{otherwise}\, .
\end{cases}
\end{equation}
Moreover, the solution is unique when it exists.
\end{proposition}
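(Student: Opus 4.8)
The plan is to reduce the statement to an elementary study of a scalar monotone function. Introduce the change of variable $s \triangleq \omega/J^*$, so that a stationary flux $J^* > 0$ corresponds bijectively to a value $s > 0$, and rewrite the stationary density \eqref{stat_density} as
\[
\rho^*(\theta) = \frac{1}{K\,Z(\theta) + s}\,.
\]
With this substitution the admissibility requirement \eqref{pos_density} — nonnegativity and boundedness of $\rho^*$ on the compact interval $[0,2\pi]$ — is equivalent to $K\,Z(\theta) + s > 0$ for every $\theta$, i.e. to $s > -\min_{\theta}\big(K\,Z(\theta)\big)$. Since $s>0$ holds automatically (because $\omega,J^*>0$), this is in turn equivalent to $s > r$ with $r$ as defined in the statement (the two branches in the definition of $r$ correspond to $\min_\theta K\,Z(\theta) \geq 0$ and $\min_\theta K\,Z(\theta) < 0$). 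Under $s>r$ the integrand is continuous and strictly positive on $[0,2\pi]$, so $\rho^*\in C^0([0,2\pi];\mathbb{R}^+)$ for free, and the normalization \eqref{equa_E0} reads $g(s) = 1$ where
\[
g(s) \triangleq \int_0^{2\pi} \frac{1}{K\,Z(\theta) + s}\,d\theta\,,\qquad s > r\,.
\]

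Next I would establish the behaviour of $g$ on $(r,+\infty)$. The map $(\theta,s)\mapsto 1/(K\,Z(\theta)+s)$ is continuous and strictly positive on $[0,2\pi]\times(r,+\infty)$, hence $g$ is finite and continuous on $(r,+\infty)$; it is strictly decreasing since the integrand is strictly decreasing in $s$ for each fixed $\theta$. As $s\to+\infty$ the integrand tends to $0$ uniformly in $\theta$, so $g(s)\to 0$. As $s\downarrow r$ the integrand increases monotonically, so by monotone convergence $g(s)$ increases to a limit $L\in(0,+\infty]$, and that limit is exactly the left-hand side of \eqref{cond2_E0}. Therefore $g$ is a continuous, strictly decreasing bijection from $(r,+\infty)$ onto $(0,L)$.

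The conclusion follows at once: the equation $g(s)=1$ admits a solution $s^*\in(r,+\infty)$ if and only if $1\in(0,L)$, i.e. if and only if $L>1$, which is precisely condition \eqref{cond2_E0}; and when such $s^*$ exists it is unique by strict monotonicity of $g$. Setting $J^*\triangleq\omega/s^*>0$ then yields, via \eqref{stat_density}, a stationary density $\rho^*$ satisfying \eqref{pos_density}–\eqref{equa_E0}, unique and continuous by the remarks above.

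I expect no deep difficulty here; the point demanding care is the behaviour of $g$ at the left endpoint — arguing that the limit appearing in \eqref{cond2_E0} is well defined in $(0,+\infty]$ (this is where monotonicity of $g$, equivalently monotone convergence, is used), that it genuinely equals $\sup_{s>r}g(s)$, and that the two branches defining $r$ correctly capture the threshold $-\min_\theta(K\,Z(\theta))$, including the borderline case where this minimum is attained and equal to zero. A secondary verification is that the admissible range is $s>r$ and not $s\geq r$, which is forced by the boundedness requirement on $\rho^*$.
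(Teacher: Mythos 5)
Your proof is correct and follows essentially the same route as the paper: both reduce the problem to the intermediate value theorem applied to a strictly monotone, continuous scalar function whose supremum over the admissible interval is the limit in \eqref{cond2_E0} (the paper uses $W(J)=\int_0^{2\pi}J/(\omega+K\,Z(\theta)J)\,d\theta$ increasing on $\mathcal{J}$, while you use the reparametrization $s=\omega/J^*$ and the decreasing function $g(s)=W(\omega/s)$). The extra care you take at the endpoint $s\downarrow r$ and with the two branches of $r$ matches the paper's implicit treatment, so nothing is missing.
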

\begin{proof}
Inequality (\ref{pos_density}) implies that the velocity $\omega+K\, Z(\theta)J^*$ is strictly positive, so that
\begin{equation}
\nonumber
J^* \in \mathcal{J}\triangleq \left(0,\lim_{\substack{s\rightarrow r \\ s>r}}\frac{\omega}{s}\right)\,.
\end{equation}
The function
\begin{equation}
\nonumber
W(J)=\int_0^{2\pi}\frac{J}{\omega+K\, Z(\theta)J}\,d\theta
\end{equation}
satisfies $W(0)=0$, is continuous on $\mathcal{J}$, and is strictly increasing on $\mathcal{J}$ since
\begin{equation}
\nonumber
\frac{dW}{dJ}=\int_0^{2\pi}\frac{\omega}{\left[\omega+K\, Z(\theta)J\right]^2}\,d\theta>0 \quad \forall J\in \mathcal{J}\,.
\end{equation}
As a consequence, the equation $W(J)=1$, which is equivalent to condition (\ref{equa_E0}), has a (unique) solution $J^*\in\mathcal{J}$ if and only if
\begin{equation}
\nonumber
\lim_{\substack{s\rightarrow r \\ s>r}} W(\omega/s)=\lim_{\substack{s\rightarrow r \\ s>r}}\int_0^{2\pi}\frac{1}{K\, Z(\theta)+s}d\theta>1\,,
\end{equation}
which concludes the proof.
\end{proof}
Proposition \ref{prop_E0} implies that the coupling constant $K$ must be bounded. For integrate-and-fire oscillators, the bounds on the coupling constant are computed analytically and are given in the following corollary.
\begin{corollary}
\label{corol_stat_dens}
For integrate-and-fire dynamics $\dot{x}=F(x)$, a stationary flux $J^*>0$ fulfilling the conditions (\ref{pos_density}) and (\ref{equa_E0}) exists if the coupling constant satisfies
\begin{equation}
\label{bound_K}
\lim_{\substack{s\rightarrow F_\mathrm{min} \\ s<F_\mathrm{min}}}\int_{\underline{x}}^{\overline{x}} \frac{s}{s-F(x)} dx<K<\overline{x}-\underline{x} \,,
\end{equation}
with $F_\mathrm{min}=\min_{x\in[\underline{x},\overline{x}]}\big(F(x)\big)$.
\end{corollary}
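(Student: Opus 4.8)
The plan is to reduce the statement to Proposition~\ref{prop_E0} by rewriting the integral condition \eqref{cond2_E0} in the state variable $x$ rather than in the phase $\theta$. Differentiating the state-phase relation \eqref{state_phase} gives $d\theta=\omega\,dx/F(x)$, while \eqref{PRC} gives $K\,Z(\theta)=K\,\omega/F(x(\theta))$; since $F>0$ on the compact interval $[\underline{x},\overline{x}]$, the map $x\mapsto\theta$ is an increasing $C^1$ bijection of $[\underline{x},\overline{x}]$ onto $[0,2\pi]$. Substituting into the integral of \eqref{cond2_E0} yields, for every admissible $s$,
\begin{equation}
\nonumber
\int_0^{2\pi}\frac{1}{K\,Z(\theta)+s}\,d\theta=\int_{\underline{x}}^{\overline{x}}\frac{\omega}{K\,\omega+s\,F(x)}\,dx\,.
\end{equation}
It then remains to identify the threshold $r$ and to evaluate the one-sided limit of this integral in the two cases $K\ge 0$ and $K<0$.

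For $K\ge 0$ we have $K\,Z\ge 0$, hence $r=0$; passing the limit inside the (bounded) integrand gives $\lim_{s\to 0^+}\int_{\underline{x}}^{\overline{x}}\omega/(K\,\omega+s\,F(x))\,dx=(\overline{x}-\underline{x})/K$ (read as $+\infty$ when $K=0$), so \eqref{cond2_E0} becomes $K<\overline{x}-\underline{x}$. For $K<0$ the minimum of $K\,Z(\theta)=K\,\omega/F(x)$ over the circle is attained where $F$ is smallest, so $r=|K|\,\omega/F_\mathrm{min}$. I would then set $\sigma\triangleq-K\,\omega/s$, which turns the integral above into $K^{-1}\int_{\underline{x}}^{\overline{x}}\sigma/(\sigma-F(x))\,dx$ and maps the one-sided limit $s\to r^+$ to $\sigma\to F_\mathrm{min}^-$; dividing \eqref{cond2_E0} by $K<0$ and reversing the inequality then reproduces exactly the lower bound in \eqref{bound_K}. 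Combining the two cases, and observing that this lower bound is negative --- since $\sigma<F_\mathrm{min}\le F(x)$ forces $\sigma/(\sigma-F(x))<0$ --- the admissible set for $K$ is precisely the open interval \eqref{bound_K}. Thus \eqref{bound_K} is in fact equivalent to condition \eqref{cond2_E0}, which in particular yields the sufficiency asserted in the corollary.

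The argument is essentially a change of variables followed by bookkeeping, so I do not expect a conceptual obstacle. The points that require care are: (i) verifying that the infimum of $K\,Z$ on $S^1(0,2\pi)$ corresponds to $F_\mathrm{min}$ and that the substitution $\sigma=-K\,\omega/s$ sends $s\to r^+$ to $\sigma\to F_\mathrm{min}^-$ with the correct orientation; and (ii) tracking the inequality reversal caused by dividing through by $K<0$. I expect (ii) to be the main source of potential sign errors, so I would carry the case $K<0$ through symbolically rather than on an example.
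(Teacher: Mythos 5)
Your proposal is correct and follows essentially the same route as the paper: identify $r$ separately for excitatory and inhibitory coupling, convert the phase integral in \eqref{cond2_E0} to a state-variable integral via \eqref{state_phase} and \eqref{PRC}, and handle the sign reversal when multiplying through by $K<0$ to obtain the lower bound in \eqref{bound_K}. The only cosmetic difference is that you perform the change of variables once before splitting into cases, whereas the paper splits first; the substance is identical.
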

\begin{proof}
If the coupling is excitatory ($K>0$), it follows from \eqref{PRC} that $K\,Z(\theta)\geq 0$ $\forall \theta\in[0,2\pi]$, so that $r=0$. Then, the condition (\ref{cond2_E0}) of Proposition \ref{prop_E0} can be rewritten as
\begin{equation}
\nonumber
\int_0^{2\pi} \frac{1}{K\, Z(\theta)} \,d\theta=\int_0^{2\pi} \frac{F(x(\theta))}{K\,\omega} d\theta= \int_{\underline{x}}^{\overline{x}}\frac{1}{K} \, dx>1
\end{equation}
given (\ref{state_phase}), or equivalently
\begin{equation}
\label{bound_K_max}
K<\overline{x}-\underline{x}\,.
\end{equation}
If the coupling is inhibitory ($K<0$), it follows from \eqref{PRC} that $r=-K\omega/F_\mathrm{min}$. Then, the condition (\ref{cond2_E0}) of Proposition \ref{prop_E0} is rewritten as
\begin{equation}
\nonumber
\lim_{\substack{s\rightarrow r \\ s>r}} \int_0^{2\pi} \frac{1}{K\,Z(\theta)+s} d\theta=\lim_{\substack{s\rightarrow F_\mathrm{min} \\ s<F_\mathrm{min}}} \int_0^{2\pi} \frac{1}{\frac{K\omega}{F(x(\theta))}-\frac{K\omega}{s}}d\theta= \lim_{\substack{s\rightarrow F_\mathrm{min} \\ s<F_\mathrm{min}}} \int_{\underline{x}}^{\overline{x}}\frac{1}{K} \frac{1}{1-\frac{F(x)}{s}} dx>1
\end{equation}
and we obtain the lower bound \eqref{bound_K} on $K$.
\end{proof}

Condition \eqref{bound_K_max} is easy to interpret in the case of finite populations: it is a necessary condition for the existence of a stationary phase-locked configuration. Since the average state difference between $N$ oscillators is $(\overline{x}-\underline{x})/N$, a coupling strength $K>\overline{x}-\underline{x}$ yields an \guillemets{avalanche} phenomenon igniting a chain reaction of firings. In this situation, a phase-locked behavior of $N$ distinct oscillators cannot exist.


\section{A strict Lyapunov function induced by the total variation distance}
\label{main_result}

Lyapunov analysis is a classical approach to study the stability of nonlinear PDE's (see e.g. \cite{Coron2008}). In this section, we extend our previous results obtained for finite populations \cite{Mauroy} to construct a strict Lyapunov function for the PDE \eqref{equa_fin}. The Lyapunov function, inspired by the $1$-norm introduced in \cite{Mauroy}, is a $L^1$ distance interpreted as the \emph{total variation distance between quantile densities}.

\subsection{Quantile density}

The description of the infinite population through the density $\rho(\theta)$ is not equivalent to the description of a finite population through a vector $\mathbf{\Theta}=(\theta_1 , \cdots , \theta_N)$. While the former corresponds to the \guillemets{amount} of oscillators as a function of the phase, the latter corresponds to the phase as a function of the oscillator index. To establish an equivalence between finite and infinite populations, we introduce an index for infinite populations of oscillators and use the concept of \textsl{quantile function}.

For infinite populations, the oscillators can be continuously labeled on the interval \mbox{$[0,1]$} and an oscillator index $\varphi \in [0,1]$  is defined as follows. Given a density function \mbox{$\rho: [0,2\pi]\mapsto \mathbb{R}^+$}, the cumulative density function $P(\theta):[0,2\pi]\mapsto [0,1]$, defined as
\begin{equation}
\nonumber
P(\theta)=\int_0^\theta \rho(s) \, ds\,,
\end{equation}
attributes an index $\varphi=P(\theta)\in [0,1]$ to each oscillator with phase $\theta$. In particular, an index $\varphi=0$ (resp. $\varphi=1$) is attributed to the oscillator at phase $\theta=0$ (resp. $\theta=2\pi$).

Next, to complete the equivalent description of infinite populations, we introduce the quantile function (widely used in statistics \cite{Parzen}): the quantile function $Q:[0,1] \mapsto [0,2\pi]$ is the inverse cumulative density function, that is,
\begin{equation}
\nonumber
Q(\varphi)=P^{-1}(\varphi)=\inf\{\theta|P(\theta)\geq \varphi\}\,.
\end{equation}
The (continuous) quantile function is equivalent to the (discrete) description $\mathbf{\Theta}$ of finite populations. For a finite number of $N$ distinct oscillators, at each firing of an oscillator ($\theta_N=2\pi)$, the remaining phases $\theta_k$ are the $N$-quantiles $\theta_k=Q^{(N)}_k$, that is 
\begin{equation}
\label{vector}
\mathbf{\Theta}=\left(\theta_1 , \cdots , \theta_{N-1}, 2\pi \right)=\left(Q^{(N)}_1 , \cdots , Q^{(N)}_{N-1} , 2\pi \right)\,.
\end{equation}
When the number of oscillators tends to infinity, the $N$-quantiles are replaced by the continuous quantile function $Q$. (Roughly speaking, the quantile function plays the role of the vector $\mathbf{\Theta}$ with an infinity of components.)

As the analog of the density $\rho$, the \textsl{quantile density function} \cite{Parzen}, also called \textsl{sparsity function}, is the function $q:[0,1]\mapsto\mathbb{R}^+$ that satisfies (see Figure \ref{CDF})
\begin{equation*}
Q(\varphi)=\int_0^{\varphi} q(s)\, ds\,.
\end{equation*}
The quantile density function, which is the derivative of the quantile function, expresses the increase of phase per unit increase of oscillator index. The density function is linked to the quantile density function by the relationship
\begin{equation}
\label{der_rho_comp}
q(\varphi)=\frac{dQ}{d\varphi}=\frac{1}{\rho\big(Q(\varphi)\big)}\,.
\end{equation}
In order to avoid some ill-defined cases, the condition $\rho>0$ must be satisfied on $[0,2\pi]$.

\begin{figure}[h]
\begin{center}
\includegraphics[width=0.9\textwidth]{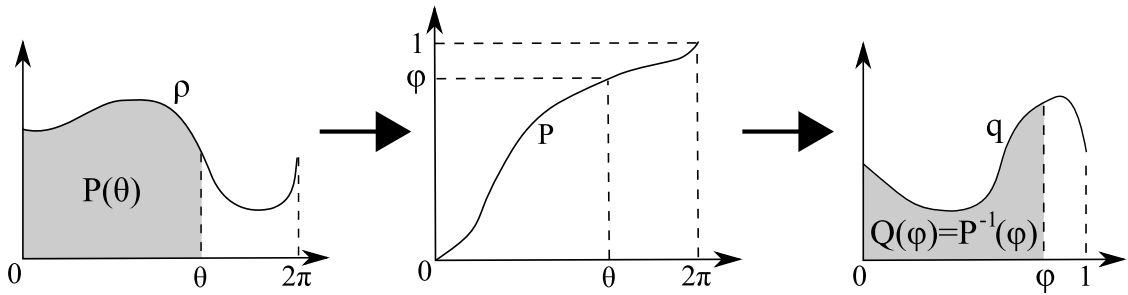}
\caption{The density function $\rho(\theta)$ (left) has a cumulative density $P(\theta)$ (center). The quantile function \mbox{$Q(\varphi)=P^{-1}(\varphi)$} is the cumulative density function of the quantile density function $q(\varphi)$ (right).}
\label{CDF}
\end{center}
\end{figure}

The reader will notice that, as the oscillators density $\rho(\theta,t)$ depends on time in the model (\ref{cont_equa}), the associated quantile function and quantile density function also depend on time and are then rigorously defined as the two-variable functions $Q(\varphi,t)$ and $q(\varphi,t)$. In addition, we denote the quantile function and the quantile density associated to the stationary solution (\ref{stat_density}) by $Q^*(\varphi)$ and $q^*(\varphi)$ respectively.

\subsection{Total variation distance}
\label{lyap_discr}

A $1$-norm introduced in our previous study \cite{Mauroy} on finite populations of monotone pulse-coupled oscillators leads to a $L^1$ distance (between quantile densities) in the case of infinite populations, a distance which can be interpreted as a \textsl{total variation distance}.

For a finite population of $N$ distinct oscillators, it is sufficient to consider only the firing instants, for which $\theta_N=2\pi$. Then, the corresponding discrete system is characterized by a simple Lyapunov function: the distance (induced by a $1$-norm) between a configuration $\mathbf{\Theta}$ and the stationary phase-locked configuration $\mathbf{\Theta^*}$. It is expressed as
\begin{equation*}
\mathcal{V}^{(N)}=\left|\theta_1-\theta^*_1\right|+\sum_{k=1}^{N-2}\left|\left(\theta_k-\theta_{k+1}\right)-\left(\theta^*_{k}-\theta^*_{k+1}\right)\right|+\left|\theta_{N-1}-\theta^*_{N-1}\right|\,.
\end{equation*}
The phases $\theta_k$ can be replaced by the $N$-quantiles, according to \eqref{vector}, and one obtains
\begin{equation}
\label{TVD_discrete}
\mathcal{V}^{(N)}=\left|Q^{(N)}_1-Q^{*(N)}_1\right|+\sum_{k=1}^{N-2}\left|\left(Q^{(N)}_k-Q^{(N)}_{k+1}\right)-\left(Q^{*(N)}_{k}-Q^{*(N)}_{k+1}\right)\right|+\left|Q^{(N)}_{N-1}-Q^{*(N)}_{N-1}\right|\,.
\end{equation}
In the limit $N\rightarrow \infty$, the continuous equivalent of \eqref{TVD_discrete} corresponds to the $L^1$ distance between the quantile density functions:
\begin{equation}
\label{Lyap_tot}
\mathcal{V}(\rho)=\int_0^1\left|\frac{\partial Q}{\partial \varphi}-\frac{dQ^*}{d\varphi}\right|d\varphi=\left\|q-q^*\right\|_{L^1} \quad \forall \rho  \in C^0([0,2\pi]\times \mathbb{R}^+;\mathbb{R}^+_0)\,.
\end{equation}
The second equality is obtained through \eqref{der_rho_comp}. One verifies that $\mathcal{V}(\rho)=0$ $\Leftrightarrow$ $q=q^*$ a.e. $\Leftrightarrow$ $\rho=\rho^*$ a.e.

Our previous study \cite{Mauroy} shows that, under mild conditions, quantity \eqref{TVD_discrete} decreases at the successive firings of the oscillators, enforcing a contraction property for the $1$-norm. We claim that, for infinite populations, the continuous equivalent \eqref{Lyap_tot} also decreases at the successive firings of the continuum, that is, \eqref{Lyap_tot} decreases continuously with time. The main result of this paper will thus establish \eqref{Lyap_tot} as a good Lyapunov function for the PDE (\ref{equa_fin}).

The Lyapunov function \eqref{Lyap_tot} is interpreted as a total variation distance. Indeed, the total variation distance between two random variables corresponds to the $L^1$ distance between the corresponding density functions (see \cite{Dunford} for further details). In the present case, the total variation distance is the sum of the maximum differences between the two quantile functions $Q$ and $Q^*$, minus the sum of the minimum differences (Figure \ref{distance}).
\begin{figure}[h]
\begin{center}
\includegraphics[width=6cm]{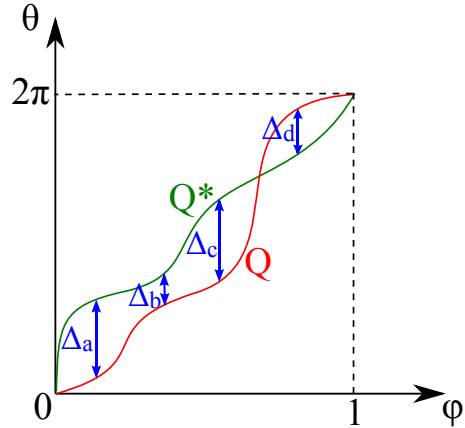}
\caption{The Lyapunov function (\ref{Lyap_tot}) is the total variation distance between two quantile density functions. In the example of the figure, the distance is equal to $\mathcal{V}=2(\Delta_a+\Delta_c+\Delta_d)-2\Delta_b$, with $\Delta_{a,b,c,d}>0$.}
\label{distance}
\end{center}
\end{figure}

\subsection{Time evolution of the Lyapunov function}

Our main result shows that the candidate Lyapunov function \eqref{Lyap_tot} has a monotone time evolution provided that the PRC is monotone.
\begin{theorem}
\label{theo_Lyap_evol}
Let $\rho(\theta,t)\in C^0([0,2\pi]\times \mathbb{R}^+;\mathbb{R}^+_0)$, with $\|\rho\|_{L_1}=1$, be a strictly positive solution of (\ref{equa_fin})-(\ref{CL2}). If the stationary density (\ref{stat_density}) exists and if either $Z''(\theta)\geq0$ $\forall \theta\in[0,2\pi]$ or $Z''(\theta)\leq0$ $\forall \theta\in[0,2\pi]$, then the Lyapunov function (\ref{Lyap_tot}) satisfies
\begin{equation}
\label{mono_V}
J(0,t) \min_{\theta\in[0,2\pi]} \Big(K\,Z'(\theta)\Big) \mathcal{V}(\rho) \leq \dot{\mathcal{V}}(\rho) \leq J(0,t) \max_{\theta\in[0,2\pi]} \Big(K\,Z'(\theta)\Big) \mathcal{V}(\rho)\,.
\end{equation}
\end{theorem}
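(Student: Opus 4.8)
The plan is to pass to the quantile description and to show that the quantile density $q(\varphi,t)=1/\rho\big(Q(\varphi,t),t\big)$ obeys a scalar linear transport equation whose multiplicative source term is precisely $K\,Z'$. Differentiating $P(Q(\varphi,t),t)=\varphi$ and substituting the continuity equation (\ref{cont_equa}) yields the characteristic ODE
\[
\partial_t Q=\omega+K\,Z(Q)\,J_0(t)-J_0(t)\,q(\varphi,t),
\]
and differentiating once more in $\varphi$ gives
\[
\partial_t q+J_0(t)\,\partial_\varphi q=K\,J_0(t)\,Z'\!\big(Q(\varphi,t)\big)\,q(\varphi,t),
\]
while the stationary relation $1/\rho^*=\omega/J^*+K\,Z$ (read off from (\ref{rho_J}) and (\ref{stat_density})) gives the companion identity $\partial_\varphi q^*=K\,Z'(Q^*)\,q^*$. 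I would first record the boundary behaviour: since $Q(0,t)=0$ and $Q(1,t)=2\pi$ for all $t$, evaluating the characteristic ODE at $\varphi=0$ and $\varphi=1$ forces $q(0,t)-q^*(0)=q(1,t)-q^*(1)=\omega\big(1/J_0(t)-1/J^*\big)$, so the defect $w:=q-q^*$ takes equal values at the two endpoints of $[0,1]$.

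Next I would differentiate $\mathcal V(\rho)=\int_0^1|w|\,d\varphi$. Under the assumption that $\{\,w(\cdot,t)=0\,\}$ is Lebesgue--null for a.e.\ $t$ one has $\dot{\mathcal V}=\int_0^1\operatorname{sgn}(w)\,\partial_t q\,d\varphi$; inserting the transport equation, writing $\partial_\varphi q=\partial_\varphi w+\partial_\varphi q^*$, using $\int_0^1\operatorname{sgn}(w)\,\partial_\varphi w\,d\varphi=|w(1,t)|-|w(0,t)|=0$ and $\partial_\varphi q^*=K\,Z'(Q^*)q^*$, and finally the chain rule $Z'(Q)q=\partial_\varphi[Z(Q)]$, collapses everything to
\[
\dot{\mathcal V}=K\,J_0(t)\int_0^1\operatorname{sgn}(w)\,\partial_\varphi\big[Z(Q)-Z(Q^*)\big]\,d\varphi .
\]
Put $\Phi:=Q-Q^*$ and $\Theta:=Z\circ Q-Z\circ Q^*$; both vanish at $\varphi=0,1$ and $w=\partial_\varphi\Phi$. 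Using the technical assumption that $w$ has finitely many sign changes $0<\varphi_1<\dots<\varphi_n<1$ (the case $n=0$ forces $\Phi\equiv0$ and is trivial), the sign of $w$ is constant and alternating on the subintervals, and a telescoping computation — using $\Phi=\Theta=0$ at $\varphi_0:=0$ and $\varphi_{n+1}:=1$ — gives
\[
\dot{\mathcal V}=2\,K\,J_0(t)\,\epsilon_1\sum_{i=1}^n(-1)^{i-1}\Theta(\varphi_i),\qquad \mathcal V=2\,\epsilon_1\sum_{i=1}^n(-1)^{i-1}\Phi(\varphi_i),
\]
where $\epsilon_1$ is the sign of $w$ on $(0,\varphi_1)$.

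The heart of the proof is to compare these two alternating sums, and this is where $Z''$ enters. By the mean value theorem $\Theta(\varphi_i)=c_i\,\Phi(\varphi_i)$, where $c_i$ is the average of $Z'$ over the interval with endpoints $Q(\varphi_i)$ and $Q^*(\varphi_i)$, so every $c_i$ lies in $[\min_\theta Z',\max_\theta Z']$. Since $Q$ and $Q^*$ are strictly increasing, these averaging intervals move to the right as $i$ grows (both endpoints increase), and the average of $Z'$ over an interval is monotone under a rightward shift exactly because $Z'$ is monotone; hence $Z''\ge0$ everywhere makes $(c_i)_i$ nondecreasing and $Z''\le0$ makes it nonincreasing. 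Moreover, because $\Phi$ vanishes at the endpoints of $[0,1]$ and its interior critical values zig--zag (consecutive local extrema alternate, each dominating its neighbour on the appropriate side), the quantities $S_\ell:=\epsilon_1\sum_{i\le\ell}(-1)^{i-1}\Phi(\varphi_i)$ and $T_\ell:=\epsilon_1\sum_{i\ge\ell}(-1)^{i-1}\Phi(\varphi_i)$ are all nonnegative with $S_n=T_1=\mathcal V/2$ — indeed $2S_\ell=\mathrm{TV}(\Phi;[0,\varphi_\ell])+\epsilon_1(-1)^{\ell-1}\Phi(\varphi_\ell)\ge\mathrm{TV}(\Phi;[0,\varphi_\ell])-|\Phi(\varphi_\ell)|\ge0$, and similarly for $T_\ell$. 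Writing $p_i:=\epsilon_1(-1)^{i-1}\Phi(\varphi_i)$, so that $\dot{\mathcal V}=2KJ_0\sum_i c_ip_i$ and $\mathcal V=2\sum_i p_i=2S_n$, Abel summation in the two forms $\sum_i c_ip_i=c_nS_n+\sum_{i<n}(c_i-c_{i+1})S_i=c_1T_1+\sum_{i>1}(c_i-c_{i-1})T_i$, combined with monotonicity of $(c_i)$ and with $S_i,T_i\ge0$, confines $\sum_i c_ip_i$ between $\min(c_1,c_n)\,\mathcal V/2$ and $\max(c_1,c_n)\,\mathcal V/2$, hence between $(\min_\theta Z')\,\mathcal V/2$ and $(\max_\theta Z')\,\mathcal V/2$. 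Multiplying by $2KJ_0(t)$ (for $K<0$ the roles of $\min$ and $\max$ interchange, which is exactly why $\min_\theta(KZ')$ and $\max_\theta(KZ')$ appear) and using $J(0,t)=J_0(t)$ yields (\ref{mono_V}).

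I expect the main obstacle to be precisely this last step. After the clean reduction to $\dot{\mathcal V}=KJ_0\int\operatorname{sgn}(w)\,\partial_\varphi(Z\circ Q-Z\circ Q^*)\,d\varphi$ one naturally tries to bound the integrand interval by interval, but that estimate genuinely fails: the ratio $\big(\Theta(\varphi_k)-\Theta(\varphi_{k-1})\big)/\big(\Phi(\varphi_k)-\Phi(\varphi_{k-1})\big)$ across a single monotone piece of $\Phi$ can overshoot $\max_\theta Z'$, and the inequality is recovered only globally, through the cancellation in the alternating sum. Making that cancellation quantitative — recognising that the mean-value coefficients $c_i$ are monotone because $Z''$ keeps a fixed sign, and that the partial sums of $\Phi$ over its critical points are nonnegative because $\Phi$ zig--zags between zeros, so that Abel summation closes the estimate — is the real content. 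The preceding steps (the transport equation for $q$, the boundary matching, the sign-change bookkeeping) are routine, modulo the regularity and genericity conditions that I have absorbed into the statement's \guillemets{technical assumptions}: strict positivity and continuity of $\rho$, finiteness of the sign-change set of $q-q^*$, and nullity of its zero set.
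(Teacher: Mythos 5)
Your argument is correct and shares the paper's overall strategy: pass to quantile densities, express $\dot{\mathcal V}$ as an alternating sum of $K\,[Z(Q(\varphi_i))-Z(Q^*(\varphi_i))]$ over the sign changes of $q-q^*$, and use the constant sign of $Z''$ to compare it with the alternating sum of $Q(\varphi_i)-Q^*(\varphi_i)$, which equals $\mathcal V/2$. The execution differs in two genuine ways. First, you reach that reduction via the transport equation $\partial_t q+J_0\,\partial_\varphi q=K J_0 Z'(Q)\,q$ and the endpoint identity $w(0,t)=w(1,t)=\omega\,(1/J_0-1/J^*)$, which kills both the $\int \mathrm{sign}(w)\,\partial_\varphi w$ term and the constant contributions; the paper computes $\partial_t Q=(J(Q)-J_0)/\rho(Q)$ and cancels the terms $\omega(1-J_0/J^*)$ by a parity argument on the number of crossings via \eqref{CL2}. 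These are equivalent, yours avoiding the parity discussion. Second, the key estimate: the paper proceeds locally, pairing each term with $\Delta^{(k)}Q\le 0$ with an adjacent one and using mean-value points $\xi_k$ with $\xi_{k-1}\le\xi_k$; you proceed globally, with averaged slopes $c_i$ (monotone in $i$ because both endpoints $Q(\varphi_i),Q^*(\varphi_i)$ increase and $Z'$ is monotone), nonnegative partial sums $S_\ell,T_\ell$, and Abel summation. This buys a uniform treatment of all sign configurations with no case bookkeeping, and replacing mean-value points by averages quietly repairs a delicate point: the mean-value intervals can overlap, so the ordering of the $\xi_k$ is not automatic, whereas monotonicity of your $c_i$ is. The one loss of generality is that you keep finitely many sign changes (and a null zero set) as standing hypotheses; the theorem has no such hypothesis, and the paper removes the finiteness assumption by an approximation argument since the bounds do not depend on the number of crossings---your bounds \eqref{mono_V} are likewise independent of $n$, so you should append the same limiting step.
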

\begin{proof}
The time derivative of \eqref{Lyap_tot} can be written as
\begin{equation}
\label{equa_dot_V}
\dot{\mathcal{V}}(\rho)=\int_0^1 \textrm{sign} \left(\frac{\partial Q}{\partial \varphi}-\frac{dQ^*}{d\varphi}\right) \frac{\partial}{\partial t}\left(\frac{\partial Q}{\partial \varphi}-\frac{dQ^*}{d\varphi}\right) d\varphi
 =\int_0^1 \textrm{sign} \big(G(\varphi,t)\big) \frac{\partial}{\partial \varphi}\left(\frac{\partial Q}{\partial t}\right) d\varphi\,,
\end{equation}
with $G(\varphi,t)=\partial Q/\partial \varphi-dQ^*/d\varphi$ and with the signum function $\textrm{sign}(x)=x/|x|$. 

Differentiating the expression $\theta \equiv Q(P(\theta,t),t)$ with respect to time $t$ leads to
\begin{equation}
\nonumber
0=\frac{d}{dt}[Q(P(\theta,t),t)]=\frac{\partial Q}{\partial t}(P(\theta,t),t)+\frac{\partial Q}{\partial \varphi}(P(\theta,t),t) \frac{\partial P}{\partial t}(\theta,t)
\end{equation}
or
\begin{equation}
\nonumber
\frac{\partial Q}{\partial t}(\varphi,t)=-\frac{\partial P}{\partial t}(\theta,t)\frac{\partial Q}{\partial \varphi}(\varphi,t)=-\frac{\partial P}{\partial t}(\theta,t)\frac{1}{\rho\big(Q(\varphi,t),t\big)}\,,
\end{equation}
given (\ref{der_rho_comp}). Furthermore,
\begin{equation}
\nonumber
\frac{\partial P}{\partial t}(\theta,t)=\int_0^\theta \frac{\partial \rho}{\partial t} (s,t) ds=-\int_0^\theta \frac{\partial J}{\partial \theta}(s,t) ds =J_0(t)-J(\theta,t)
\end{equation}
and the expression (\ref{equa_dot_V}) becomes (from this point on, we drop the time variable)
\begin{equation}
\label{V_dot_bis}
\dot{\mathcal{V}}(\rho)=\int_0^1 \textrm{sign} \big(G(\varphi)\big) \frac{\partial}{\partial \varphi}\left(\frac{J\big(Q(\varphi)\big)-J_0}{\rho\big(Q(\varphi)\big)}\right) d\varphi\,.
\end{equation}

Assume that the function $G(\varphi)$ has a finite number of zero crossings, that is, a finite number $N_c$ of values $\varphi_c^{(0)}=0<\varphi_c^{(1)}<\cdots<\varphi_c^{(N_c)}<\varphi_c^{(N_c+1)}=1$
satisfying $G(\varphi_c^{(k)})=0$ with either a right or a left nonzero derivative. The assumption on $N_c$ will be relaxed at the end of the proof. Without loss of generality, assume that $G$ is nonnegative on even intervals and nonpositive on odd intervals, that is,
\begin{equation}
\label{ordre}
\displaystyle
(-1)^k\left(\frac{\partial Q}{\partial \varphi}-\frac{dQ^*}{d \varphi}\right)\geq 0 \qquad \forall \varphi\in[\varphi_c^{(k-1)},\varphi_c^{(k)}]\,.
\end{equation}

Next, computing the integral in \eqref{V_dot_bis}, we obtain
\begin{equation}
\nonumber
\dot{\mathcal{V}}(\rho)=2\sum_{k=1}^{N_c} (-1)^k \frac{J\big(Q(\varphi_c^{(k)})\big)-J_0}{\rho\big(Q(\varphi_c^{(k)})\big)}\,.
\end{equation}
Since it follows from (\ref{der_rho_comp}) and the equality $G(\varphi_c^{(k)})=0$ that
\begin{equation}
\label{pt_crit}
\rho\big(Q(\varphi_c^{(k)},t),t\big)=\rho^*\big(Q^*(\varphi_c^{(k)})\big)\,,\quad k=1,\dots,N_c\,,
\end{equation}
one obtains, given (\ref{rho_J}), (\ref{dynamics}), and (\ref{stat_density}),
\begin{eqnarray}
\dot{\mathcal{V}}(\rho)&=&2\sum_{k=1}^{N_c} (-1)^k \left[v\big(Q(\varphi_c^{(k)})\big)-\frac{J_0}{\rho^*\big(Q^*(\varphi_c^{(k)})\big)}\right] \nonumber \\
&=&2\sum_{k=1}^{N_c} (-1)^k \left[\omega+K\, Z\big(Q(\varphi_c^{(k)})\big) J_0-\frac{\omega J_0}{J^*}-K\, Z\big(Q^*(\varphi_c^{(k)})\big)J_0\right]\,. \label{V_inter}
\end{eqnarray}

The boundary condition (\ref{CL2}) yields a monotone relationship between the values $\rho(0)$ and $\rho(2\pi)$, that is, $\rho(0)>\rho^*(0)$ if and only if $\rho(2\pi)>\rho^*(2\pi)$. Apart from the case $\rho(0)=\rho^*(0)$, $\rho(2\pi)=\rho^*(2\pi)$, the number $N_c$ of values $\varphi^{(c)}$ satisfying (\ref{pt_crit}), is even, owing to the continuity of $\rho$ and $Q$ ($\rho^*$ and $Q^*$). Consequently, the terms $(-1)^k(\omega-\omega J_0/J^*)$ in (\ref{V_inter}) cancel each other. In the particular case $\rho(0)=\rho^*(0)$, $N_c$ is not necessarily even but it follows from (\ref{CL2}) that $J_0=J^*(0)=J^*$ and the above-mentioned terms are equal to zero. As a consequence, one obtains
\begin{eqnarray}
\dot{\mathcal{V}}(\rho) & = & 2\,J_0\,\sum_{k=1}^{N_c} (-1)^k K \left[Z\big(Q(\varphi_c^{(k)})\big)-Z\big(Q^*(\varphi_c^{(k)})\big)\right] \nonumber \\
& = & 2\,J_0\,\sum_{k=1}^{N_c} (-1)^k K\, Z'(\xi_k) \left[Q(\varphi_c^{(k)})-Q^*(\varphi_c^{(k)})\right] \label{sum} \\
& \triangleq & 2J_0\,\sum_{k=1}^{N_c} T^{(k)}\,, \nonumber
\end{eqnarray}
where the second equality is obtained through the mean value theorem, with \mbox{$\xi_k\in[Q(\varphi_c^{(k)}),Q^*(\varphi_c^{(k)})]$} or \mbox{$\xi_k\in[Q^*(\varphi_c^{(k)}),Q(\varphi_c^{(k)})]$}.

It remains to consider separately each term $T^{(k)}$ in the sum (\ref{sum}). For the sake of simplicity, we first consider the case $K\,Z''\geq 0$. Denoting \mbox{$(-1)^k [Q(\varphi_c^{(k)})-Q^*(\varphi_c^{(k)})]$} by $\Delta^{(k)} Q$, we also distinguish two cases: $\Delta^{(k)} Q>0$ and $ \Delta^{(k)} Q\leq 0$.

\textbf{Case $\mathbf{\Delta^{(k)} Q>0}$. }One has
\begin{equation}
\label{terme1_max}
T^{(k)}=K\, Z'(\xi_k) \Delta^{(k)} Q \leq \max_{\theta\in[0,2\pi]} \Big(K\,Z'(\theta)\Big) \Delta^{(k)} Q
\end{equation}
and
\begin{equation}
\label{terme1_min}
T^{(k)}=K\, Z'(\xi_k) \Delta^{(k)} Q \geq \min_{\theta\in[0,2\pi]} \Big(K\,Z'(\theta)\Big) \Delta^{(k)} Q\,.
\end{equation}

\textbf{Case $\mathbf{\Delta^{(k)} Q \leq 0}$. }We need to consider the addition of the term $T^{(k)}$ with the term $T^{(k-1)}$ or $T^{(k+1)}$. By (\ref{ordre}), one gets
\begin{eqnarray}
\label{equa_Q1}
-\Delta^{(k-1)} Q \leq \Delta^{(k)} Q & \leq & 0\,,\\
\label{equa_Q1b}
-\Delta^{(k+1)} Q \leq \Delta^{(k)} Q & \leq & 0\,.
\end{eqnarray}
It follows that
\begin{equation}
\label{eq_pos}
T^{(k-1)}+T^{(k)}=K\, Z'(\xi_{k-1})\Delta^{(k-1)} Q+ K\, Z'(\xi_k)\Delta^{(k)} Q \leq K\, Z'(\xi_k) \left(\Delta^{(k-1)} Q+\Delta^{(k)} Q\right)\,.
\end{equation}
The assumption $K\,Z''\geq 0$ implies \mbox{$K\, Z'(\xi_{k-1})\leq K\, Z'(\xi_k)$}, with \mbox{$\xi_{k-1}\leq \xi_k$} and the above inequality then follows from (\ref{equa_Q1}). In addition, (\ref{equa_Q1}) also implies that the right hand in inequality (\ref{eq_pos}) is the multiplication of $K\,Z'$ with the positive quantity $\Delta^{(k-1)} Q+\Delta^{(k)} Q$. Hence, (\ref{eq_pos}) can be rewritten as
\begin{equation}
\label{terme2_max}
T^{(k-1)}+T^{(k)}\leq \max_{\theta\in[0,2\pi]} \Big(K\,Z'(\theta)\Big)\left(\Delta^{(k-1)} Q+\Delta^{(k)} Q\right) \,.
\end{equation}
Similarly, considering the addition of the terms $T^{(k)}$ and $T^{(k+1)}$ and using (\ref{equa_Q1b}), one obtains
\begin{equation}
\label{terme2_min}
T^{(k)}+T^{(k+1)}\geq \min_{\theta\in[0,2\pi]} \Big(K\,Z'(\theta)\Big)\left(\Delta^{(k)} Q+\Delta^{(k+1)} Q\right) \,.
\end{equation}
\smallskip

Next, the inequalities (\ref{terme1_max}) and (\ref{terme2_max}) imply that the expression (\ref{sum}) can be rewritten as
\begin{eqnarray}
\dot{\mathcal{V}}(\rho)=2 J_0 \sum_{k=1}^{N_c} T^{(k)} && \leq 2 J_0\max_{\theta\in[0,2\pi]} \Big(K\,Z'(\theta)\Big) \sum_{k=1}^{N_c} \Delta^{(k)} Q \nonumber \\
&& = J_0 
\max_{\theta\in[0,2\pi]} \Big(K\,Z'(\theta)\Big) \int_0^1 \textrm{sign} \left(\frac{\partial Q}{\partial \varphi}-\frac{dQ^*}{d\varphi}\right) \left(\frac{\partial Q}{\partial \varphi}-\frac{dQ^*}{d\varphi}\right) d\varphi \nonumber \\
&& = J_0\max_{\theta\in[0,2\pi]} \Big(K\,Z'(\theta)\Big) \mathcal{V}(\rho) \label{inequa1} \,.
\end{eqnarray}

In the case $\Delta^{(k)} Q \leq 0$, the two terms $T^{(k)}$ and $T^{(k-1)}$ are considered together. The additional $T^{(k-1)}$ itself corresponds to the case $\Delta^{(k-1)} Q>0$, and does not require to be associated in turn with another term. In addition, there is no boundary problem since the term $T^{(1)}$ satisfies $\Delta^{(1)} Q>0$, given (\ref{ordre}) and $Q(0)=Q^*(0)=0$. 

Similarly, the inequalities (\ref{terme1_min}) and (\ref{terme2_min}) lead to
\begin{equation}
\label{inequa2}
\dot{\mathcal{V}}(\rho)=2 J_0 \sum_{k=1}^{N_c} T^{(k)} \geq J_0 \min_{\theta\in[0,2\pi]} \Big(K\,Z'(\theta)\Big) \mathcal{V}(\rho)\,.
\end{equation}
\smallskip

In the case $K\,Z''\leq 0$, the inequalities (\ref{terme2_max}) and (\ref{terme2_min}) are reversed, that is, the sum $T^{(k-1)}+T^{(k)}$ has a lower bound and the sum $T^{(k)}+T^{(k+1)}$ has an upper bound. Hence, the inequalities (\ref{inequa1}) and (\ref{inequa2}) still hold.

We have completed the proof assuming a finite number $N_c$ of zero crossings $G(\varphi_c^{(k)})=0$, but the reader will notice that the bounds (\ref{inequa1}) and (\ref{inequa2}) do not depend on $N_c$. For an arbitrary $\rho$, we conclude the argument as follows: by continuity, there exists a sequence of $\rho_i \rightarrow \rho$ such that every member of the corresponding sequence $G_i(\varphi) \rightarrow G(\varphi)$ has a finite number of zero crossings. Because the bounds (\ref{inequa1}) and (\ref{inequa2}) hold independently of $i$, they also hold in the limit, which concludes the proof.
\end{proof}

If there is no coupling or if the PRC is constant, \eqref{equa_fin} is a (marginally stable) standard transport equation and Theorem \ref{theo_Lyap_evol} implies that the Lyapunov function is constant along the solutions of \eqref{equa_fin}. This is in agreement with the fact that the total variation distance is a conserved quantity for most of the systems of conservation laws \cite{Serre}. But whereas the distance is constant with a standard transport equation, the distance is not constant under the influence of the coupling: the nonlinear coupling term in \eqref{equa_fin}, which depends on $K\,Z'$, induces a variation \eqref{mono_V} of the Lyapunov function. A monotone decreasing function $K\,Z$ implies a decreasing Lyapunov function along the solutions. This is discussed in detail in the next section.\\

The above result emphasizes the importance of considering (i) \emph{quantile densities instead of densities} and (ii) \emph{a $L^1$ distance (total variation distance) instead of a $L^2$ distance}. The importance of these two points is illustrated in the two following paragraphs.

\textbf{Quantile density vs. density function. }The Lyapunov function (\ref{Lyap_tot}) is induced by the total variation distance between \emph{quantile densities}. An alternative choice would be the total variation distance between \emph{density functions}, that is,
\begin{equation}
\label{Lyap_essai}
\mathcal{V}_\textrm{bis}(\rho)=\int_0^{2\pi}\left|\rho-\rho^* \right| d\theta \quad \forall \rho \in C^0([0,2\pi]\times \mathbb{R}^+;\mathbb{R}^+_0)\,.
\end{equation}
The time derivative is given by 
\begin{equation}
\label{deriv_Lyap_essai}
\dot{\mathcal{V}}_\textrm{bis}=\int_0^{2\pi} \textrm{sign} (\rho-\rho^*) \frac{\partial\rho}{\partial t} d\theta=\sum_{k=1}^{N_c+1} (-1)^k \left[J(\theta_c^{(k-1)})-J(\theta_c^{(k)})\right]\,,
\end{equation}
where we used (\ref{cont_equa}) and introduced the notation analog to (\ref{ordre})-(\ref{pt_crit}), that is, the values $\theta_c^{(k)}$ ($k=1,\dots,N_c$) satisfying
\begin{equation}
\label{equal_rho}
\rho(\theta_c^{(k)},t)=\rho^*(\theta_c^{(k)},t)\,.
\end{equation}
A simple argument shows that the Lyapunov function cannot be strictly decreasing along the solutions of (\ref{equa_fin}). Indeed, for any density satisfying $\rho(0)=\rho^*(0)$, one must also have $J_0=J^*(0)=J^*$ since it follows from (\ref{CL2}) that there is a bijection between the values $\rho(0)$ and $J_0$. But then, (\ref{rho_J}) and (\ref{equal_rho}) imply that \mbox{$J(\theta_c^{(k)})=J^*(\theta_c^{(k)})=J^*$} for all $k$ and the derivative (\ref{deriv_Lyap_essai}) leads to $\dot{\mathcal{V}}_\textrm{bis}=0$. Even though one actually shows that $\dot{\mathcal{V}}\leq 0$, La Salle principle cannot be used to prove the stability, since it is not obvious to prove the precompactness of the trajectories.

This argument shows that a direct application of the total variation distance on density functions does not lead to a good candidate Lyapunov function for the PDE (\ref{equa_fin}). A key point is to apply the total variation distance on \emph{quantile functions} instead.

\textbf{$\mathbf{L^1}$ distance vs. $\mathbf{L^2}$ distance. }The Lyapunov function (\ref{Lyap_tot}) is induced by a \emph{$L^1$ distance}. An alternative choice would be a Lyapunov function induced by a (more common) \emph{$L^2$ distance}, that is $\mathcal{V}_\textrm{ter}=\|q-q^*\|_{L^2}$. However, straightforward computations (not presented here) show that this candidate Lyapunov function satisfies $\dot{\mathcal{V}}_\textrm{ter}(\rho)>0$ for some $\rho$. This remark also applies to the finite dimensional case.

\section{Convergence analysis for monotone oscillators}
\label{well_posedness}

The result of Theorem \ref{theo_Lyap_evol} has a strong implication in the case of monotone PRC's. It implies that the Lyapunov function (\ref{Lyap_tot}) has a monotone time evolution if the PRC is monotone. In this situation, the Lyapunov function either converges to a lower bound or to an upper bound. These two bounds correspond to the two particular behaviors which characterize the dichotomy highlighted in Section \ref{dicho}. They are given by
\begin{equation}
\nonumber
0\leq\mathcal{V}(\rho)\leq \left\|q\right\|_{L^1}+\left\|q^*\right\|_{L^1}=Q(1)-Q(0)+Q^*(1)-Q^*(0)=4\pi\,.
\end{equation}
At the lower bound, the Lyapunov function is equal to zero if and only if the density corresponds to the asynchronous (stationary) density (\ref{stat_density}). On the other hand, we will show that the function tends to the upper bound $4\pi$ if the density $\rho$ tends to a Dirac function (synchronization).

\subsection{Exponential convergence to the asynchronous state}
\label{conv_as}

Theorem \ref{theo_Lyap_evol} will be used to study convergence to the asynchronous state for monotone decreasing functions $K\, Z(\theta)$. However, in order to apply (\ref{mono_V}) along the solutions, we need to show independently that the flux $J_0(t)$ remains strictly positive and uniformly bounded for all time. This condition will restrict the set of admissible initial conditions. In particular, the initial conditions have to ensure that $0<J_0<\infty$ when any oscillator crosses $\theta=2\pi$ for the first time. Formally, we consider the characteristic curves $\Lambda_{\theta}(t)$ defined by $\dot{\Lambda}_{\theta}=v(\Lambda_{\theta}(t),t)$, $\Lambda_{\theta}(0)=\theta\in[0,2\pi]$, $\Lambda_{\theta}(\overline{t}_{\theta})=2\pi$. 

The (strictly positive) initial density $\rho(\theta,0)>0$ must be such that the value of the flux at the intersection of the characteristic curves with $\theta=2\pi$ is strictly positive and bounded, that is
\begin{equation}
\label{IC}
\rho(\theta,0)>0 \quad \Rightarrow \quad
0<J\big(\Lambda_\theta(\overline{t}_\theta)=2\pi,\overline{t}_{\theta}\big)=J_0(\overline{t}_\theta)<\infty \quad \forall\,\theta\in[0,2\pi]\,.
\end{equation}
Condition (\ref{IC}) is the condition that is imposed on the initial conditions to ensure that the flux satisfies $0<J_0(t)<\infty$ for $t\in[0,\overline{t}_{\theta=0}]$. But if the flux $J_0(t)$ is bounded for $t\in[0,\overline{t}_{\theta=0}]$, then the flux $J_0(t)$ is uniformly bounded for all time and Theorem \ref{theo_Lyap_evol} can be applied to prove the exponential decreasing of the Lyapunov function. The result is summarized in the following proposition.
\begin{proposition}
\label{prop_dec}
Consider the transport PDE (\ref{equa_fin})-(\ref{CL2}) and assume that $Z(\theta)$ is such that (i) the stationary density (\ref{stat_density}) exists, (ii) $K\,Z'(\theta)<0$ $\forall \theta\in[0,2\pi]$, and (iii) either $Z''(\theta)\geq0$ $\forall \theta\in[0,2\pi]$ or $Z''(\theta)\leq0$ $\forall \theta\in[0,2\pi]$. Then all solutions \mbox{$\rho(\theta,t)\in C^0([0,2\pi]\times \mathbb{R}^+;\mathbb{R}^+)$}, with $\|\rho\|_{L_1}=1$ and with an initial condition satisfying (\ref{IC}), exponentially converge to the asynchronous state and the Lyapunov function (\ref{Lyap_tot}) is exponentially decreasing along them.
\end{proposition}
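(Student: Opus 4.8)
The plan is to combine the differential inequality of Theorem~\ref{theo_Lyap_evol} with an a priori bound on the firing rate $J_0(t)$. Assumption~(ii), together with continuity of $Z'$ on the compact interval $[0,2\pi]$, gives $\beta \triangleq -\max_{\theta\in[0,2\pi]}\big(K\,Z'(\theta)\big) > 0$. Assumptions (i) and (iii) are exactly what is needed to invoke Theorem~\ref{theo_Lyap_evol}, whose upper bound in (\ref{mono_V}) then reads
\begin{equation*}
\dot{\mathcal V}(\rho) \leq -\beta\, J_0(t)\, \mathcal V(\rho)\,,
\end{equation*}
valid as long as the solution exists and is strictly positive (so that the quantile description is well defined). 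In particular $\mathcal V$ is non-increasing, which already keeps the solution away from the synchronous state; the point is to upgrade this to exponential decay, for which it suffices to bound $J_0(t)$ below by a strictly positive constant uniformly in time.

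The second step (the crux) is to show that there exist $0<m\leq M<\infty$ with $m\leq J_0(t)\leq M$ for all $t\geq 0$, and hence that the velocity $v(\theta,t)=\omega+K\,Z(\theta)\,J_0(t)$ stays uniformly bounded away from $0$, so that the solution is globally defined and the quantile functions $Q(\cdot,t)$, $q(\cdot,t)$ remain well defined. By construction, condition~(\ref{IC}) guarantees $0<J_0(t)<\infty$ for $t\in[0,\overline{t}_{\theta=0}]$, where $\overline{t}_{\theta=0}$ is the first time a characteristic emanating from $\theta=0$ reaches $\theta=2\pi$; by continuity of $J_0$ on this compact interval there are $0<m_0\leq M_0$ with $m_0\leq J_0(t)\leq M_0$ there. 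One then propagates these bounds forward in time by a bootstrap on characteristics: for $t>\overline{t}_{\theta=0}$, the value of $\rho$ — hence of the flux $J=v\rho$ — at any phase is transported from the circle within one bounded transit time via the characteristic ODE $\dot\rho=-\rho\,J_0\,K\,Z'(\Lambda)$ of (\ref{tot_der}), whose right-hand side is controlled once $J_0$ is; equivalently, the boundary relation (\ref{CL2}) expresses $\rho(0,t)$, and thus $J_0(t)$, through values of $\rho$ carried from earlier boundary data with a strictly positive minimal delay. Since the stationary density (\ref{stat_density}) exists, the admissible range of $J_0$ keeping $v>0$ is a fixed interval, and a continuation argument shows the bounds cannot degenerate, yielding $J_0(t)\in[m,M]$ for all time.

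The third step is to integrate. With $J_0(t)\geq m>0$ for all $t$, the inequality above gives $\mathcal V(\rho(\cdot,t))\leq \mathcal V(\rho(\cdot,0))\,e^{-\beta m t}$, so $\mathcal V\to 0$ exponentially fast. Since $\mathcal V(\rho)=0$ if and only if $\rho=\rho^*$ almost everywhere, and all functions involved are continuous, this is precisely exponential convergence of $\rho(\cdot,t)$ to the stationary (asynchronous) density (\ref{stat_density}), and the Lyapunov function (\ref{Lyap_tot}) is exponentially decreasing along the solution, which proves the proposition.

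The main obstacle is the second step. Theorem~\ref{theo_Lyap_evol} is powerless if $J_0(t)$ is allowed to collapse to $0$ (the decay rate $\beta m$ would vanish) or to blow up (the velocity could change sign, destroying the quantile representation on which the Lyapunov analysis rests). The delicate point is therefore to turn the ``one-revolution'' control furnished by (\ref{IC}) into a genuinely uniform-in-time bound, which requires tracking characteristics over arbitrarily long horizons and exploiting both the transport structure of (\ref{equa_fin}) and the monotone boundary relation (\ref{CL2}). Everything else — the differential inequality and its integration — is routine once this bound is in hand.
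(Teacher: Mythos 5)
Your overall architecture coincides with the paper's: obtain uniform bounds $0<J^{\textrm{min}}\leq J_0(t)\leq J^{\textrm{max}}<\infty$ for all time, feed them into Theorem~\ref{theo_Lyap_evol}, and integrate the resulting differential inequality to get exponential decay of $\mathcal{V}$, hence convergence to the stationary density (\ref{stat_density}). The first and third steps are correct and routine, as you say. The problem is that the second step --- which you yourself identify as the crux --- is asserted rather than proved. Along one revolution, solving (\ref{tot_der}) on a characteristic from $(\theta=0,\underline{t})$ to $(\theta=2\pi,\overline{t})$ gives $\rho(2\pi,\overline{t})=\rho(0,\underline{t})\exp\bigl(-\int_{\underline{t}}^{\overline{t}}J_0(t)\,K\,Z'(\Lambda(t))\,dt\bigr)$, and since $K\,Z'<0$ this exponential factor exceeds one: the boundary density, and therefore via (\ref{CL2}) the flux, can a priori \emph{increase} from one revolution to the next. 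A naive per-revolution propagation of the bounds obtained on $[0,\overline{t}_{\theta=0}]$ therefore yields estimates that may deteriorate geometrically, and the observation that ``the admissible range of $J_0$ keeping $v>0$ is a fixed interval'' does not prevent $J_0$ from drifting monotonically toward an endpoint of that interval over infinitely many revolutions (collapse to $0$, which destroys your decay rate $\beta m$, or blow-up of the flux, which is exactly the absorption scenario the hypothesis (\ref{IC}) is meant to exclude). Your ``bootstrap'' and ``continuation argument shows the bounds cannot degenerate'' are restatements of the claim, not a proof of it.

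The paper's Step~1 supplies precisely the missing mechanism, and it is quantitative, not soft: rewriting the exponential in the space variable along the characteristic (equation (\ref{eq_integral_comp})), comparing the integrand with the same integrand evaluated at the running maximum $J^{\textrm{max}}_{[\underline{t},\overline{t}]}$ through the ratio $C_i\leq 1$ defined in (\ref{equa_C_M}), and combining with the boundary relation (\ref{CL2}) yields the contraction $J_0(\overline{t})\leq C_i^{\textrm{max}}\,J^{\textrm{max}}_{[\underline{t},\overline{t}]}$ with $C_i^{\textrm{max}}<1$ away from the steady state (inequality (\ref{equa_ineg_flux})); an induction over the successive revolution intervals then gives the uniform upper bound, and a symmetric argument gives the strictly positive lower bound $J^{\textrm{min}}$, which is the half of the estimate your exponential rate actually needs and which your sketch does not address beyond the first revolution. (The same characteristic formula also shows that strict positivity of $\rho$ persists for all $t>0$, which you assume in order to apply Theorem~\ref{theo_Lyap_evol} but do not justify.) Until an argument of this type is supplied, the proposal has a genuine gap at its central step.
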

\begin{proof}
We proceed in two steps.\\
\textbf{Step 1. } Let $[\underline{t}_1=0,\overline{t}_1]$, \dots, $[\underline{t}_i,\overline{t}_i]$, with $\underline{t}_{i}=\overline{t}_{i-1}$, be the successive time intervals so that the characteristic curves $\Lambda_{[\underline{t}_i,\overline{t}_i]}$ satisfy $\Lambda_{[\underline{t}_i,\overline{t}_i]}(\underline{t}_i)=0$ and $\Lambda_{[\underline{t}_i,\overline{t}_i]}(\overline{t}_i)=2\pi$. We will show that $J_0(t)$ is uniformly bounded by
\begin{equation}
\label{uniform_bound}
J^{\textrm{min}}\leq J_0(t)\leq J^{\textrm{max}}\qquad \forall t>0\,,
\end{equation}
with the bounds $J^{\textrm{min}}=\min_{t\in[0,\overline{t}_1]} J_0(t)$ and $J^{\textrm{max}}<\max_{t\in[0,\overline{t}_1]} J_0(t)$. 

We consider a characteristic curve $\Lambda_{[\underline{t},\overline{t}]}(t)$ with $[\underline{t},\overline{t}]\subset[\underline{t}_{i-1},\overline{t}_i]$ (i.e. $\underline{t}\in[\underline{t}_{i-1},\overline{t}_{i-1}]$ and $\overline{t}\in[\underline{t}_i,\overline{t}_i]$). Solving the total derivative equation (\ref{tot_der}) on $\Lambda_{[\underline{t},\overline{t}]}(t)$ yields
\begin{equation}
\label{equa_expo_evol_tho}
\rho(0,\underline{t})\exp \left(-\int_{\underline{t}}^{\overline{t}} J_0(t)\, K\,Z'\big(\Lambda_{[\underline{t},\overline{t}]}(t)\big) \, dt\right)=\rho(2\pi,\overline{t})\,.
\end{equation}
Next, using (\ref{rho_J}) and expressing the integral in the space variable $\Lambda_{[\underline{t},\overline{t}]}$ along the characteristic curve lead to
\begin{equation}
\label{eq_integral_comp}
\frac{J_0(\underline{t})}{\omega+K\,Z(0)\,J_0(\underline{t})} \exp \left(-\int_{0}^{2\pi} \frac{J_0}{\omega+K\, Z(\Lambda_{[\underline{t},\overline{t}]})\, J_0} K \,Z'(\Lambda_{[\underline{t},\overline{t}]}) \, d\Lambda_{[\underline{t},\overline{t}]}\right)=\frac{J_0(\overline{t})}{\omega+K\, Z(2\pi)\, J_0(\overline{t})}\,.
\end{equation}
Since $\rho(\theta,0)>0$, the exponential evolution \eqref{equa_expo_evol_tho} of the density along (all) the characteristic curves implies that the density remains strictly positive for all $t>0$. Hence, the flux is also strictly positive for all $t>0$. Then, we define \mbox{$J^{\textrm{max}}_{[\underline{t},\overline{t}]} \triangleq \max_{t\in[\underline{t},\overline{t}]} J_0(t)>0$} and we can define
\begin{equation}
\label{equa_C_M}
\begin{split}
C_{i} \triangleq & \max_{[\underline{t},\overline{t}]\subset[\underline{t}_{i-1},\overline{t}_i]}  \Bigg\{\int_{0}^{2\pi} \frac{J_0}{\omega+K\, Z(\Lambda_{[\underline{t},\overline{t}]})\, J_0} K \,Z'(\Lambda_{[\underline{t},\overline{t}]}) \, d\Lambda_{[\underline{t},\overline{t}]}\\
& \qquad \qquad \Big/ \int_{0}^{2\pi} \frac{J^{\textrm{max}}_{[\underline{t},\overline{t}]}}{\omega+K\, Z(\Lambda_{[\underline{t},\overline{t}]})\, J^{\textrm{max}}_{[\underline{t},\overline{t}]}} K \,Z'(\Lambda_{[\underline{t},\overline{t}]}) \, d\Lambda_{[\underline{t},\overline{t}]} \Bigg\}\,.
\end{split}
\end{equation}
Since $K\, Z'<0$ $\forall \theta$, one has $C_i=1$ only if $J_0(t)$ is constant on at least one interval $[\underline{t},\overline{t}]$. In this case, the solution has reached the steady state and it follows that
\begin{equation}
\label{equa_J_max_first}
J_0(t)=J^* \qquad \forall t\in[\underline{t}_i,\infty)\,.
\end{equation}
Otherwise, one has $C_i<1$ and, using \eqref{equa_C_M} and computing the integral in equation \eqref{eq_integral_comp}, we obtain
\begin{equation}
\frac{J_0(\overline{t})}{\omega+K\,Z(2\pi)\, J_0(\overline{t})} \leq \frac{J_0(\underline{t})}{\omega+K\,Z(0)\,J_0(\underline{t})} \left(\frac{\omega+K\,Z(0)\,J^{\textrm{max}}_{[\underline{t},\overline{t}]}}{\omega+K\,Z(2 \pi)\,J^{\textrm{max}}_{[\underline{t},\overline{t}]}}\right)^{C_i}\,.
\end{equation}
Since $J_0(\underline{t})\leq J^{\textrm{max}}_{[\underline{t},\overline{t}]}$ by definition, it follows that
\begin{equation}
\nonumber
\frac{J_0(\overline{t})}{\omega+K\,Z(2\pi)\, J_0(\overline{t})} \leq  \frac{J^{\textrm{max}}_{[\underline{t},\overline{t}]}}{\omega+K\,Z(2\pi)\,J^{\textrm{max}}_{[\underline{t},\overline{t}]}}
\underbrace{\max_{[\underline{t},\overline{t}]\subset[\underline{t}_{i-1},\overline{t}_i]} \left(\frac{\omega+K\,Z(2\pi)\,J^{\textrm{max}}_{[\underline{t},\overline{t}]}}{\omega+K\,Z(0)\,J^{\textrm{max}}_{[\underline{t},\overline{t}]}}\right)^{1-C_i}}_{<1}\,,
\end{equation}
which implies that there exists a constant $C_i^\textrm{max}<1$ (see Remark \ref{rem_constant}) such that
\begin{equation}
\label{equa_ineg_flux}
J_0(\overline{t})\leq C_i^\textrm{max} J^{\textrm{max}}_{[\underline{t},\overline{t}]} \qquad \forall \overline{t}\in[\underline{t}_i,\overline{t}_i] \,.
\end{equation}
It follows from \eqref{equa_ineg_flux} that
\begin{equation}
\label{equa_J_max_2nd}
J_0(t)\leq C_i^\textrm{max} J^{\textrm{max}}_{[\underline{t}_{i-1},\overline{t}_{i-1}]} \qquad \forall t \in[\underline{t}_i,\overline{t}_i]
\end{equation}
and a straightforward induction argument using \eqref{equa_J_max_first} and \eqref{equa_J_max_2nd} implies that 
\begin{equation*}
J_0(t) \leq \left(\prod_{k=2}^i C_k^\textrm{max}\right) J^{\textrm{max}}_{[0,\overline{t}_1]} \leq J^{\textrm{max}} \quad \forall t>0\,,
\end{equation*}
where some constants $C_k^\textrm{max}$ are equal to one if \eqref{equa_J_max_first} is satisfied at time $t$. The proof for the lower bound $J^{\textrm{min}}$ in \eqref{uniform_bound} follows on similar lines.

\textbf{Step 2: } Since $\rho(\theta,t)>0$ for all $t>0$, the result of Theorem \ref{theo_Lyap_evol} is applied and yields
\begin{equation}
\nonumber
J_{\textrm{min}} \min_{\theta\in[0,2\pi]} \Big(K\,Z'(\theta)\Big) \mathcal{V}(\rho) \leq \dot{\mathcal{V}}(\rho) \leq J_{\textrm{max}} \max_{\theta\in[0,2\pi]} \Big(K\,Z'(\theta)\Big) \mathcal{V}(\rho)
\end{equation}
or, equivalently,
\begin{equation}
\nonumber
-J_{\textrm{max}} \max_{\theta\in[0,2\pi]} \Big|K\,Z'(\theta)\Big| \mathcal{V}(\rho) \leq \dot{\mathcal{V}}(\rho) \leq -J_{\textrm{min}} \min_{\theta\in[0,2\pi]}  \Big|K\,Z'(\theta)\Big| \mathcal{V}(\rho) \leq 0
\end{equation}
since $K\,Z'<0$. The initial condition (\ref{IC}) implies that $J^{\textrm{min}}>0$ and $J^{\textrm{max}}<\infty$ and the Lyapunov function is exponentially decreasing, which concludes the proof.

\begin{remark}\label{rem_constant} When $KZ(2\pi)<0$, in the particular case $J^{\textrm{max}}_{[\underline{t},\overline{t}]}=\omega/|KZ(2\pi)|$, \eqref{equa_ineg_flux} does not hold. However, since $\rho(\theta,t)>0$ for all $t>0$, the value $\omega/|KZ(2\pi)|$ can be used as the upper bound $J^{\textrm{max}}$ in \eqref{uniform_bound}.
\end{remark}

\end{proof}

Proposition \ref{prop_dec} is a strong result showing that, provided that the function $K\, Z(\theta)$ is decreasing, the solution $\rho(\cdot,t)$ remains, for all time, in a particular set of functions \mbox{$\{\rho|\mathcal{V}(\rho)<C\}$}, with the constant $C>0$. Inside this set, the solution eventually converges at exponential rate toward the stationary solution $\rho^*$, corresponding to $\mathcal{V}(\rho^*)=0$.\\

The restriction \eqref{IC} on the initial condition is rather weak for decreasing functions $K\,Z(\theta)$, as shown by the following proposition.
\begin{proposition}
\label{prop_CI_dec}
Assume that $K\, Z'(\theta)<0$ $\forall \theta\in[0,2\pi]$. If $K\,Z(2\pi)\leq 0$, then (\ref{IC}) is always satisfied. If $K\,Z(2\pi)>0$, then (\ref{IC}) is satisfied if
\begin{equation}
\label{cond_CI2b}
\rho(\theta,0)<\frac{1}{K\,Z(\theta)} \quad \forall \theta \in [0,2\pi]\,.
\end{equation}
\end{proposition}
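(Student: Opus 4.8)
The plan is to reduce condition (\ref{IC}) to an inequality on the density at $\theta=2\pi$ and then control that density by integrating the transport dynamics along characteristics. Evaluating (\ref{rho_J})--(\ref{dynamics}) at $\theta=2\pi$ and solving for the flux gives $J_0(\overline t_\theta)=\omega\,\rho(2\pi,\overline t_\theta)\big/\big(1-K Z(2\pi)\,\rho(2\pi,\overline t_\theta)\big)$, exactly as in (\ref{equa_ajout}). Hence (\ref{IC}) holds as soon as $0<\rho(2\pi,\overline t_\theta)<\infty$ when $K Z(2\pi)\le 0$, and as soon as $0<\rho(2\pi,\overline t_\theta)<1/(K Z(2\pi))$ when $K Z(2\pi)>0$. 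Positivity is immediate: solving the total derivative (\ref{tot_der}) along the characteristic $\Lambda_\theta$ joining $(\theta,0)$ to $(2\pi,\overline t_\theta)$ and changing the integration variable from $t$ to $\Lambda=\Lambda_\theta(t)$ via $d\Lambda=v\,dt$, as in the derivation of (\ref{eq_integral_comp}), yields $\rho(2\pi,\overline t_\theta)=\rho(\theta,0)\,\exp\!\big(\int_\theta^{2\pi} J_0\,|K Z'(\Lambda)|/(\omega+K Z(\Lambda)J_0)\,d\Lambda\big)\ge\rho(\theta,0)>0$, the exponent being nonnegative because $K Z'<0$.

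For the upper bound I would split on the sign of $K Z(2\pi)$. When $K Z(2\pi)<0$, the denominator $1-K Z(2\pi)\rho(2\pi,\overline t_\theta)\ge 1$ and moreover $J_0=\omega\rho/(1+|K Z(2\pi)|\rho)<\omega/|K Z(2\pi)|$ at every instant (every time is the arrival time at $\theta=2\pi$ of some characteristic), so the exponent above grows at most linearly in $t$ and no finite-time blow-up of $\rho(2\pi,\cdot)$ is possible; thus (\ref{IC}) holds with no extra assumption. The borderline case $K Z(2\pi)=0$ (so $K Z\ge0$ everywhere) is treated the same way, bounding $\rho(2\pi,\cdot)$ on $[0,\overline t_{\theta=0}]$ by a Gr\"onwall-type estimate. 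When $K Z(2\pi)>0$, monotonicity of $K Z$ forces $K Z(\theta)>0$ for all $\theta$, and the key estimate is
\[
\frac{J_0\,|K Z'(\Lambda)|}{\omega+K Z(\Lambda)J_0}=\frac{|K Z'(\Lambda)|}{K Z(\Lambda)}\cdot\frac{K Z(\Lambda)J_0}{\omega+K Z(\Lambda)J_0}<\frac{|K Z'(\Lambda)|}{K Z(\Lambda)}=-\frac{d}{d\Lambda}\ln K Z(\Lambda),
\]
the strict inequality coming from $\omega>0$. Integrating over $\Lambda\in[\theta,2\pi]$ gives $\rho(2\pi,\overline t_\theta)<\rho(\theta,0)\,K Z(\theta)/K Z(2\pi)$, and feeding in the hypothesis (\ref{cond_CI2b}), $\rho(\theta,0)<1/(K Z(\theta))$, yields precisely $\rho(2\pi,\overline t_\theta)<1/(K Z(2\pi))$, which is what is needed.

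The one genuinely delicate point --- and what I expect to be the main obstacle --- is that this argument uses the characteristic-evolution formula as though $\Lambda_\theta$ were already known to exist on all of $[0,\overline t_\theta]$ with $J_0$ finite along it, whereas that is part of what must be established. I would close this with a continuity (bootstrap) argument on $T^{*}=\sup\{T:J_0\text{ finite on }[0,T]\}$: on $[0,T^{*})$ all the estimates above are valid, and upgrading the strict inequality (\ref{cond_CI2b}) to a uniform bound $\rho(\theta,0)\le 1/(K Z(\theta))-\varepsilon$ --- possible since $[0,2\pi]$ is compact and $\rho(\cdot,0)$, $Z$ are continuous --- makes the resulting bound on $\rho(2\pi,\cdot)$, hence on $J_0$, uniform on $[0,T^{*})$, contradicting $J_0\to\infty$ at $T^{*}$ unless $T^{*}\ge\overline t_{\theta=0}$; beyond $\overline t_{\theta=0}$ the mechanism of Step~1 of Proposition~\ref{prop_dec} already supplies a uniform bound. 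The subtlety inside this bootstrap is ensuring the arrival times $\overline t_\theta$ are themselves finite, i.e.\ that the speed $v=\omega+K Z(\Lambda)J_0$ does not degenerate to $0$ in the regions where $K Z<0$; the a priori bound $J_0<\omega/|K Z(2\pi)|$ keeps $v$ strictly positive (and, with a little care, bounded below), which is exactly what makes the case $K Z(2\pi)\le0$ go through cleanly, whereas in the case $K Z(2\pi)>0$ one simply has $v>\omega$ throughout.
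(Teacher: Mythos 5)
Your proof follows essentially the same route as the paper: reduce (\ref{IC}) via the boundary relation (\ref{CL2}) to positivity/upper bounds on $\rho(2\pi,\overline{t}_\theta)$, integrate the total derivative (\ref{tot_der}) along characteristics for positivity, and for $K\,Z(2\pi)>0$ use the bound $J_0\,|K\,Z'|/(\omega+K\,Z\,J_0)<|Z'|/Z$ (the paper's ``replace $J_0$ by its maximal value'' step), which integrates to exactly the paper's conclusion $\rho(2\pi,\overline{t}_\theta)<1/[K\,Z(2\pi)]$. Your bootstrap on $T^*$ plays the role of the paper's induction on $\theta$, so the argument is correct and equivalent in substance.
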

\begin{proof}
\textbf{Case $\mathbf{K\,Z(2\pi)\leq 0}$.} Using the boundary condition (\ref{CL2}), the condition $0<J_0(\overline{t}_\theta)<\infty$ is turned into a condition on $\rho(2\pi,\overline{t}_\theta)$ and (\ref{IC}) is equivalent to
\begin{equation}
\label{cond_CI1}
\rho(\theta,0)>0 \quad \Rightarrow \quad
\rho(2\pi,\overline{t}_\theta)>0 \quad \forall\,\theta\in[0,2\pi]\,.
\end{equation}
For $\theta=2\pi$, condition (\ref{cond_CI1}) is satisfied, since $\overline{t}_{2\pi}=0$. Next, we proceed by induction on $\theta$: given $\theta$ and assuming that (\ref{cond_CI1}) is satisfied for all $\tilde{\theta}>\theta$, we prove that (\ref{cond_CI1}) also holds at $\theta$. The total derivative equation (\ref{tot_der}) is well-defined on the characteristic curve $\Lambda_{\theta}(t)$ since $0<J_0(t)<\infty$ for all $t<\overline{t}_{\theta}$. Solving (\ref{tot_der}) along the characteristic curve yields
\begin{equation}
\label{charac_sol}
\rho(2\pi,\overline{t}_{\theta})=\rho(\theta,0) \, \exp\left(-\int_0^{\overline{t}_{\theta}} J_0(t) \, K \,Z'\big(\Lambda_{\theta}(t)\big)\, dt\right)
\end{equation}
and $\rho(\theta,0)>0$ implies \mbox{$\rho(2\pi,\overline{t}_{\theta})>0$}. Condition (\ref{IC}), equivalent to (\ref{cond_CI1}), is then always satisfied.

\textbf{Case $\mathbf{K\,Z(2\pi)>0}$.} Using (\ref{CL2}), the condition \mbox{$0<J_0(\overline{t}_\theta)<\infty$} is turned into a condition on $\rho(2\pi,\overline{t}_\theta)$ and (\ref{IC}) is equivalent to
\begin{equation}
\label{cond_CI2}
\rho(\theta,0)>0 \quad \Rightarrow \quad
0<\rho(2\pi,\overline{t}_\theta)<\frac{1}{K\,Z(2\pi)} \quad \forall\,\theta\in[0,2\pi]\,.
\end{equation}
The strict condition $\rho(\theta,0)>0$ always implies \mbox{$\rho(2\pi,\overline{t}_\theta)>0$}, as in the case \mbox{$K\,Z(2\pi)\leq 0$}. Hence, we focus on the additional upper bound on the density $\rho(2\pi,\overline{t}_\theta)$. For $\theta=2\pi$, condition (\ref{cond_CI2b}) implies (\ref{cond_CI2}), since $\overline{t}_{2\pi}=0$. Next, we proceed by induction on $\theta$: given $\theta$ and assuming that (\ref{cond_CI2}) is satisfied for all $\tilde{\theta}>\theta$, we prove that (\ref{cond_CI2}) also holds at $\theta$ (provided that condition (\ref{cond_CI2b}) is satisfied). Using (\ref{charac_sol}) with condition (\ref{cond_CI2b}) leads to
\begin{equation}
\nonumber
\rho(2\pi,\overline{t}_{\theta})<\frac{1}{K\,Z(\theta)}\exp\left(-\int_0^{\overline{t}_{{\theta}}} J_0(t) \, K \,Z'\big(\Lambda_{{\theta}}(t)\big)\, dt\right)\,.
\end{equation}
Expressing the integral in the space variable along the characteristic curve yields
\begin{equation}
\nonumber
\rho(2\pi,\overline{t}_{\theta})<\frac{1}{K\,Z(\theta)} \exp\left(-\int_{\theta}^{2\pi} \frac{J_0}{\omega+K\,Z(\Lambda_\theta)\, J_0} K\,Z'(\Lambda_\theta) \, d\Lambda_\theta \right)\,.
\end{equation}
Since $K\,Z'$ is negative, the flux $J_0$ can be replaced by its maximal value, that is, $J_0\rightarrow \infty$ and the above inequality leads to
\begin{equation}
\label{inequa_inter}
\rho(2\pi,\overline{t}_{\theta})<\frac{1}{K\,Z(\theta)} \exp\left(-\int_{\theta}^{2\pi} \frac{1}{Z(\Lambda_\theta)} \,Z'(\Lambda_\theta) \, d\Lambda_\theta \right)\,.
\end{equation}
The relation (\ref{inequa_inter}) is well-defined since $Z(\theta)\neq 0$ for all $\theta\in[0,2\pi]$. Finally, computing the integral in (\ref{inequa_inter}) implies that $\rho(2\pi,\overline{t}_{\theta})<1/[K\,Z(2\pi)]$. Condition (\ref{IC}), equivalent to (\ref{cond_CI2}), is then satisfied. This concludes the proof.
\qquad \end{proof}

When the conditions of Proposition \ref{prop_CI_dec} are not satisfied, condition (\ref{IC}) may fail to hold, in which case the flux blows-up (finite escape time to infinity). In the case $K\,Z(2\pi)>0$, there is a positive feedback between the flux and the velocity: a high value of $J_0$ increases the velocity through the coupling, which in turn increases the flux. When the density approaches the critical value
\begin{equation}
\label{cond_absorp}
\rho(2\pi,t)=\frac{1}{K\, Z(2\pi)}\,,
\end{equation}
the flux is high enough to blow-up through the positive feedback.

This finite escape time phenomenon is related to the \textsl{absorption} phenomenon observed for finite populations. If two oscillators are close enough, the firing of one oscillator can trigger the instantaneous firing of the second: the latter is \textsl{absorbed} by the former. In particular, condition \eqref{cond_CI2b} is equivalent for finite populations to an initial condition imposing a minimal state distance $K/N$ between any two oscillators, a condition that prevents the absorption phenomenon.

\subsection{Finite time convergence to a synchronous state} 
\label{subsec_synchronous}

For increasing functions $K\,Z(\theta)$, Theorem \ref{theo_Lyap_evol} implies that the Lyapunov function (\ref{Lyap_tot}) is strictly increasing, and a synchronous behavior is observed in finite time, for any initial condition. Either the flux $J_0(t)$ becomes infinite in finite time or the density $\rho(0,t)$ becomes infinite in finite time.

The finite time convergence to synchronization is established in Proposition \ref{prop_sync}. As a preliminary to this result, we need the following lemma.

\begin{lemma}
\label{lemma_TVD}
The Lyapunov function (\ref{Lyap_tot}) satisfies
\begin{equation}
\nonumber
\mathcal{V}=\|q-q^*\|_{L^1}\leq 4\pi-2\, q_{\textrm{min}}
\end{equation}
with
\begin{equation}
\nonumber
q_{\textrm{min}}=\min\left(\min_{\varphi\in[0,1]} q(\varphi),\min_{\varphi\in[0,1]} q^*(\varphi)\right)\,.
\end{equation}
\end{lemma}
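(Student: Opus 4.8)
The plan is to bound $\|q - q^*\|_{L^1} = \int_0^1 |q(\varphi) - q^*(\varphi)| \, d\varphi$ pointwise and then integrate. Fix $\varphi \in [0,1]$. Both $q(\varphi)$ and $q^*(\varphi)$ are nonnegative (they are quantile densities associated to strictly positive densities via \eqref{der_rho_comp}), and both are bounded below by $q_{\textrm{min}}$ by definition. The elementary observation I would use is this: if $a, b$ are two reals with $a \geq m$ and $b \geq m$ (here $m = q_{\textrm{min}} \geq 0$), then $|a - b| = \max(a,b) - \min(a,b) \leq (a + b) - 2m$, since $\max(a,b) \leq a + b - \min(a,b) \leq a + b - m$ is not quite what I want --- more directly, $|a-b| \le \max(a,b) - m \le (a+b-m) - m$ only if $\min(a,b)\le a+b-\max(a,b)$... let me instead argue $|a-b| = \max(a,b)-\min(a,b) \le \max(a,b) + \min(a,b) - 2\min(a,b) = a+b-2\min(a,b) \le a+b-2m$. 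That is the clean pointwise inequality.

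Applying this with $a = q(\varphi)$, $b = q^*(\varphi)$, $m = q_{\textrm{min}}$ gives $|q(\varphi) - q^*(\varphi)| \le q(\varphi) + q^*(\varphi) - 2 q_{\textrm{min}}$ for every $\varphi \in [0,1]$. Integrating over $[0,1]$ yields
\begin{equation}
\nonumber
\mathcal{V} = \int_0^1 |q(\varphi) - q^*(\varphi)|\, d\varphi \le \int_0^1 q(\varphi)\, d\varphi + \int_0^1 q^*(\varphi)\, d\varphi - 2 q_{\textrm{min}}\,.
\end{equation}
Now I invoke the normalization: $\int_0^1 q(\varphi)\, d\varphi = Q(1) - Q(0) = 2\pi - 0 = 2\pi$ (since $Q$ is the quantile function of a normalized density on $[0,2\pi]$, with $Q(0)=0$, $Q(1)=2\pi$), and identically $\int_0^1 q^*(\varphi)\, d\varphi = 2\pi$. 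This is exactly the computation already displayed at the start of Section \ref{well_posedness}, namely $\|q\|_{L^1} + \|q^*\|_{L^1} = 4\pi$. Substituting gives $\mathcal{V} \le 4\pi - 2 q_{\textrm{min}}$, which is the claim.

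There is essentially no hard step here --- this is a one-line pointwise estimate followed by integration against the two normalization identities. The only point requiring a word of care is ensuring that $q_{\textrm{min}}$ is well-defined (the minima are attained, or at least the infima are finite and nonnegative), which follows from $\rho, \rho^* \in C^0([0,2\pi]; \mathbb{R}^+_0)$ being bounded and the standing assumption $\rho > 0$ (guaranteeing $q = 1/\rho(Q(\cdot))$ is well-defined and continuous, hence its minimum over the compact interval $[0,1]$ exists); the same holds for $q^*$ whenever the stationary density exists and is strictly positive, as established in Proposition \ref{prop_E0} and the surrounding discussion. I would state these regularity facts in one sentence and then give the three-line computation above.
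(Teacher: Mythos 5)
Your proof is correct and is essentially the paper's own argument: the appendix proof splits $[0,1]$ into the sets $\mathcal{A}^+=\{q\geq q^*\}$ and $\mathcal{A}^-=\{q<q^*\}$ and bounds each piece using $q,q^*\geq q_{\textrm{min}}$ together with $\int_0^1 q\,d\varphi=\int_0^1 q^*\,d\varphi=2\pi$, which is exactly your pointwise estimate $|q-q^*|\leq q+q^*-2\,q_{\textrm{min}}$ integrated over $[0,1]$. Your single-inequality presentation is a slightly more streamlined organization of the same computation, so no further comparison is needed.
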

\begin{proof}
The proof of Lemma \ref{lemma_TVD} can be found in Appendix.
\end{proof}

Through Theorem \ref{theo_Lyap_evol} and the preceding lemma, the following proposition establishes the finite time convergence to the synchronous state.
\begin{proposition}
\label{prop_sync}
Consider the transport PDE (\ref{equa_fin})-(\ref{CL2}) and assume that $Z(\theta)$ is such that (i) the stationary density (\ref{stat_density}) exists, (ii) $K\,Z'(\theta)>0$ $\forall \theta\in[0,2\pi]$, and (iii) either $Z''(\theta)\geq0$ $\forall \theta\in[0,2\pi]$ or $Z''(\theta)\leq0$ $\forall \theta\in[0,2\pi]$. Then all solutions \mbox{$\rho(\theta,t)\in C^0([0,2\pi]\times [0,t_{\textrm{fin}});\mathbb{R}^+)$}, $t_{\textrm{fin}}<\infty$, with $\|\rho\|_{L_1}=1$ and with an initial condition $\rho(\theta,0)>0$, converge in finite time to a synchronous state. That is, if $K\,Z(0)\geq 0$, the flux satisfies $J(0,t_{\textrm{fin}})=\infty$, or if $K\,Z(0)<0$, the density satisfies $\rho(0,t_{\textrm{fin}})=\infty$.
\end{proposition}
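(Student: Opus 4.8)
My plan is to argue by contradiction, \emph{assuming the solution is global} --- that it exists on all of $[0,\infty)$ as a strictly positive continuous density with $v(\theta,t)=\omega+K\,Z(\theta)\,J_0(t)>0$ everywhere (equivalently, with a finite firing rate $J_0$) --- and deriving a contradiction; the finiteness of the maximal existence time $t_{\mathrm{fin}}$ will then be automatic, and a short continuation argument will identify the blow-up. Since $K\,Z'>0$ on $[0,2\pi]$, the constant $m\triangleq\min_{\theta\in[0,2\pi]}\big(K\,Z'(\theta)\big)$ is strictly positive, so Theorem~\ref{theo_Lyap_evol} (its hypotheses being guaranteed by (i), (iii) and positivity of $\rho$) gives $\dot{\mathcal{V}}(\rho)\ge m\,J_0(t)\,\mathcal{V}(\rho)$. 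Leaving aside the degenerate case $\rho(\cdot,0)=\rho^*$ (where $\mathcal{V}\equiv0$ and the solution is the stationary one), we have $\mathcal{V}(\rho(\cdot,0))>0$, hence by Gr\"onwall $\mathcal{V}(\rho(\cdot,t))\ge\mathcal{V}(\rho(\cdot,0))\,\exp\!\big(m\int_0^t J_0(s)\,ds\big)$. Comparing with the a priori bound $\mathcal{V}\le4\pi$ of Lemma~\ref{lemma_TVD} will yield the \emph{firing budget}
\begin{equation}
\nonumber
\int_0^\infty J_0(s)\,ds\ \le\ \frac1m\,\ln\!\frac{4\pi}{\mathcal{V}(\rho(\cdot,0))}\ <\ \infty\,,
\end{equation}
i.e.\ only a finite total amount of mass can ever cross $\theta=0$.

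Next I would show this is impossible. Setting $M\triangleq\big(\max_{\theta}|K\,Z(\theta)|\big)\int_0^\infty J_0<\infty$, the characteristic $\Lambda_\theta$ issued from any phase $\theta\in[0,2\pi]$ has \emph{unwrapped} phase $\widetilde\Lambda_\theta(T)=\theta+\omega T+\int_0^T K\,Z\big(\Lambda_\theta(s)\big)J_0(s)\,ds\ge\omega T-M$, and $\widetilde\Lambda_\theta$ is nondecreasing because $v>0$; hence each oscillator fires (crosses $2\pi\mathbb{Z}$) at least $\big\lfloor(\omega T-M)/(2\pi)\big\rfloor$ times during $[0,T]$. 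As the population has unit mass, $\int_0^T J_0(s)\,ds$ equals the mass-weighted total number of firings on $[0,T]$ and is therefore $\ge\big\lfloor(\omega T-M)/(2\pi)\big\rfloor\to\infty$ as $T\to\infty$, contradicting the firing budget. So no global solution exists and $t_{\mathrm{fin}}<\infty$.

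It then remains to identify what degenerates at $t_{\mathrm{fin}}$. Because $K\,Z$ is increasing, $\min_{\theta}v(\theta,t)=v(0,t)=\omega+K\,Z(0)\,J_0(t)$, and (\ref{tot_der}) with $K\,Z'>0$ shows $\rho$ strictly \emph{decreases} along every characteristic, so that $\|\rho(\cdot,t)\|_\infty\le\max\big(\|\rho(\cdot,0)\|_\infty,\ \sup_{s\le t}\rho(0,s)\big)$ with $\rho(0,s)=J_0(s)/v(0,s)$. If $K\,Z(0)\ge0$, then $v\ge\omega$ throughout and $\rho$ is controlled by $\sup\rho(0,\cdot)$, so the solution could be prolonged as long as $J_0$ stayed bounded (when $K\,Z(0)>0$ this is the same as $\rho(0,\cdot)$ staying below $1/(K\,Z(0))$; when $K\,Z(0)=0$, $J_0=\omega\,\rho(0,\cdot)$); hence $J_0$ must blow up near $t_{\mathrm{fin}}$, i.e.\ $J(0,t_{\mathrm{fin}})=\infty$. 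If $K\,Z(0)<0$, then $J_0<\omega/|K\,Z(0)|$ is bounded and $v$ is bounded above; were $v(0,\cdot)$ bounded below on $[0,t_{\mathrm{fin}})$, then (using $v\ge\omega$ where $K\,Z\ge0$ and $v\ge v(0,\cdot)$ where $K\,Z<0$) $v$ would be bounded below everywhere, $\rho$ would stay bounded, and the solution would extend --- contradicting maximality of $t_{\mathrm{fin}}$; hence $\inf_{t<t_{\mathrm{fin}}}v(0,t)=0$, along which $J_0=(\omega-v(0,\cdot))/|K\,Z(0)|\to\omega/|K\,Z(0)|>0$, so $\rho(0,t)=J_0/v(0,t)\to\infty$, i.e.\ $\rho(0,t_{\mathrm{fin}})=\infty$.

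I expect the first three steps to be routine: the first is a direct application of Theorem~\ref{theo_Lyap_evol}, the second uses only Lemma~\ref{lemma_TVD}, and the cycling estimate is elementary once one notices that a finite firing budget turns the coupling into a bounded perturbation of rigid rotation at speed $\omega$. The real obstacle will be the last step: turning ``the solution ceases to exist at $t_{\mathrm{fin}}$'' into the sharp statement that \emph{that particular} quantity ($J_0$ if $K\,Z(0)\ge0$, $\rho(0,\cdot)$ if $K\,Z(0)<0$) blows up requires a local-existence/continuation principle for (\ref{equa_fin})--(\ref{CL2}) in the class of strictly positive continuous densities with finite flux --- in effect, the fact that the characteristic representation prolongs the solution as long as $v$ is bounded away from $0$ and $\rho(0,\cdot)$ stays bounded. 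I would either invoke such a well-posedness result or assemble it by hand from the characteristic formulas already used above.
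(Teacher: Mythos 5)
Your overall strategy is sound and its core is a genuinely different route from the paper's. For the finite-time part you integrate the lower bound of Theorem~\ref{theo_Lyap_evol} directly (Gr\"onwall) against the crude bound $\mathcal{V}\leq 4\pi$ to obtain a finite \emph{firing budget} $\int_0^\infty J_0\,dt<\infty$, and then contradict it by the elementary observation that an integrable coupling term leaves the characteristics cycling at essentially the free rate $\omega/2\pi$, so the mass-weighted number of firings grows linearly in time. The paper instead works cycle by cycle: over each interval in which one characteristic sweeps $[0,2\pi]$ the integral of $J_0$ equals one, so $\mathcal{V}$ grows by a fixed factor $\exp(\min K Z')$ per cycle; it then needs the \emph{sharp} form of Lemma~\ref{lemma_TVD}, $\mathcal{V}\leq 4\pi-2q_{\min}$, to convert $\mathcal{V}\to 4\pi$ into $q_{\min}\to 0$, i.e.\ an arbitrarily large density maximum, and finally propagates that maximum along a characteristic (with decay controlled by $\exp(\max KZ')$, since $\int J_0\leq 1$ per passage) to show that $\rho(2\pi,\cdot)$ reaches the explicit critical value (\ref{cond_absorp}) or (\ref{cond_vel_zero}) in finite time. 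Your budget argument is arguably cleaner and needs only the trivial bound $\mathcal{V}\leq 4\pi$; note that both your argument and the paper's implicitly require $\mathcal{V}(\rho(\cdot,0))>0$, i.e.\ exclude the stationary initial condition, which you at least state explicitly.

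The genuine gap is the one you flag yourself: your identification of \emph{which} quantity blows up at $t_{\mathrm{fin}}$ rests on a continuation principle (``the solution can be prolonged as long as $J_0$ stays bounded,'' resp.\ ``as long as $v(0,\cdot)$ is bounded away from $0$'') for the nonlinear boundary-coupled transport problem (\ref{equa_fin})--(\ref{CL2}). No such local well-posedness/continuation result is available in the paper, and it is not routine here because the boundary flux is determined implicitly by the solution through (\ref{equa_ajout}); you would have to build it by hand from the characteristic representation, which is a nontrivial addition, not a citation. The paper's proof deliberately avoids this abstract step: it identifies \emph{a priori}, from the algebraic relations (\ref{rho_J}) and (\ref{CL2}), the critical boundary densities (\ref{cond_absorp}) (giving $J_0=\infty$ when $K\,Z(0)\geq 0$) and (\ref{cond_vel_zero}) (giving $v(0,\cdot)=0$, hence $\rho(0,\cdot)=\infty$, when $K\,Z(0)<0$, this value being the smaller of the two), and then shows constructively that $\rho(2\pi,\cdot)$ must exceed the relevant critical value within a finite, explicitly bounded number of firing cycles. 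If you either supply the continuation lemma or replace your third step by the paper's explicit critical-value tracking (your budget argument can still replace the growth-per-cycle estimate), the proof is complete.
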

\begin{proof}
An infinite flux $J_0(t)$ or an infinite density $\rho(0,t)$ is obtained when the density $\rho(2\pi,t)$ reaches a critical value. If $K\,Z(0)\geq 0$, the value $K\,Z(2\pi)$ is positive since $K\,Z$ is increasing. Hence, the flux $J_0$ becomes infinite when the density $\rho(2\pi,t)$ exceeds the critical value (\ref{cond_absorp}). If $K\,Z(0)<0$, the velocity (\ref{dynamics}) at $\theta=0$ is equal to zero when the flux reaches the value $J_0(t)=\omega/|K\,Z(0)|$ or equivalently, given (\ref{CL2}), when the density reaches the value
\begin{equation}
\label{cond_vel_zero}
\rho(2\pi,t)=\frac{1}{K\,Z(2\pi)-K\,Z(0)}\,.
\end{equation}
With a velocity equal to zero at $\theta=0$, the relationship (\ref{rho_J}) implies that the density is infinite at $\theta=0$. (If $K\,Z(0)<0$ along with $K\,Z(2\pi)>0$, the reader will notice that the value (\ref{cond_absorp}) has no importance, since (\ref{cond_absorp}) is greater than (\ref{cond_vel_zero}).) 

Next, we show that the density $\rho(2\pi,t)$ must necessarily reach the critical value (\ref{cond_absorp}) or (\ref{cond_vel_zero}) in finite time $t_{\textrm{fin}}$. Let us consider a characteristic curve $\Lambda(t)$, with $\Lambda(\underline{t})=0$ and $\Lambda(\overline{t})=2\pi$ and assume that the synchronous state is not reached within $[\underline{t},\overline{t}]$, so that $\rho(\theta,t)\in C^0([0,2\pi]\times[\underline{t},\overline{t}];\mathbb{R}^+_0)$. (The initial condition $\rho(\theta,0)>0$ implies that the density remains strictly positive on (all) the characteristic curves.) Applying Theorem \ref{theo_Lyap_evol} and integrating (\ref{mono_V}), one has
\begin{equation}
\nonumber
\min_{\theta\in[0,2\pi]} \Big(K\,Z'(\theta)\Big) \int_{\underline{t}}^{\overline{t}} J_0(t)\,dt \leq \int_{\mathcal{V}(\underline{t})}^{\mathcal{V}(\overline{t})} \frac{1}{\mathcal{V}} \, d\mathcal{V} \leq  \max_{\theta\in[0,2\pi]} \Big(K\,Z'(\theta)\Big) \int_{\underline{t}}^{\overline{t}} J_0(t) \, dt \,.
\end{equation}
Since $[\underline{t},\overline{t}]$ is the time interval corresponding to the complete evolution of an oscillator from $\theta=0$ to $\theta=2\pi$ , the integral of the flux $J_0$ in the above equation is equal to one and it follows that
\begin{equation}
\nonumber
\exp \left(\min_{\theta\in[0,2\pi]} \Big(K\,Z'(\theta)\Big)\right) \leq \frac{\mathcal{V}(\overline{t})}{\mathcal{V}(\underline{t})}  \leq\exp \left(\max_{\theta\in[0,2\pi]} \Big(K\,Z'(\theta)\Big)\right) \,.
\end{equation}
The condition $K\,Z'>0$ implies that the Lyapunov function strictly increases within the time interval $[\underline{t},\overline{t}]$. Considering $n$ successive intervals $[0,\overline{t}_1]$, \dots, $[\underline{t}_n,\overline{t}_n]$, with $\underline{t}_{i+1}=\overline{t}_i$, one obtains
\begin{equation}
\nonumber
\frac{\mathcal{V}(\overline{t}_n)}{\mathcal{V}(0)} \geq \exp \left(n\,\min_{\theta\in[0,2\pi]} \Big(K\,Z'(\theta)\Big)\right)\,.
\end{equation}
Hence, a given value $\overline{\mathcal{V}}>\mathcal{V}(0)$ is reached within at most $n_{\textrm{max}}$ time intervals, with
\begin{equation}
\nonumber
n_{\textrm{max}}\leq \frac{\log\big({\overline{\mathcal{V}}}/\mathcal{V}(0)\big)}{\min_{\theta\in[0,2\pi]} \Big(K\,Z'(\theta)\Big)}\,.
\end{equation}
Since the time length of each interval $[\underline{t}_k,\overline{t}_k]$ ($k=1,\dots,n_{\textrm{max}}$) is finite, any value $\overline{\mathcal{V}}<4\pi$ is reached in finite time. By Lemma \ref{lemma_TVD}, $\mathcal{V}=\overline{\mathcal{V}}$ implies that $q_{\textrm{min}}\leq (4\pi-\overline{\mathcal{V}})/2$. When considering values $\overline{\mathcal{V}}$ close to $4\pi$, the minimum of the quantile density $q$ reaches in finite time any given value close to zero. Given (\ref{der_rho_comp}), this implies that the maximum of the density $\rho_M$ reaches in finite time any given value (provided that the critical value (\ref{cond_absorp}) or (\ref{cond_vel_zero}) is not already reached). In particular, the value
\begin{equation}
\label{rho_max}
\rho_M=\rho_c \exp \left(\max_{\theta\in[0,2\pi]} \Big(K\,Z'(\theta)\Big)\right)\,,
\end{equation}
with $\rho_c$ denoting the critical value (\ref{cond_absorp}) or (\ref{cond_vel_zero}), is reached in finite time. Then, the maximum value $\rho_M$ (obtained at $\theta_M$ at time $t_M$) decreases along the characteristic curve $\Lambda(t)$, with $\Lambda(t_M)=\theta_M$. Solving the total derivative equation (\ref{tot_der}), one shows that the variation along the characteristic curve is bounded:
\begin{equation}
\nonumber
\rho(2\pi,t_{\textrm{fin}})=\rho_M \exp \left(-\int_{t_M}^{t_{\textrm{fin}}} J_0(t) \, K\,Z'\big(\Lambda(t)\big) dt \right) \geq \rho_M \exp \left(-\max_{\theta\in[0,2\pi]} \Big(K\,Z'(\theta)\Big)\right)\,,
\end{equation}
where the inequality is obtained since the integral of $J_0$ on $[t_M,t_\textrm{fin}]$ is less than one. With $\rho_M$ given by (\ref{rho_max}), the density exceeds the critical value in finite time $t_{\textrm{fin}}$, which concludes the proof.
\qquad \end{proof}

Proposition \ref{prop_sync} shows that oscillators with monotone increasing functions $K\,Z$ converge to a synchronous behavior corresponding to a solution with a finite escape time to infinity. If the coupling is excitatory ($0\leq K\,Z(0)<K\,Z(2\pi)$), the flux $J_0$ blows-up and is infinite at time $t_{\textrm{fin}}$, as well as the velocity $v(2\pi,t_{\textrm{fin}})$. If the coupling is inhibitory ($K\,Z(0)<0$), the velocity $v(0,t)$ reaches the value zero at time $t_{\textrm{fin}}$ and the oscillators accumulate at $\theta=0$, yielding an infinite density $\rho(0,t_{\textrm{fin}})$.

\section{Implications for populations of oscillators}
\label{discussion}

The results of Section \ref{main_result} and Section \ref{well_posedness} are technical. In this section, their implications on the behavior of (monotone) pulse-coupled oscillators are discussed with more details.

Emphasis is put on the strong parallel between the analysis of the present paper on infinite populations and earlier results on finite populations. We also consider more general models and discuss the relevance of an instantaneous impulsive coupling.

\subsection{Parallel with finite populations}
\label{subsec_parallel}

The results of Proposition \ref{prop_dec} and Proposition \ref{prop_sync} apply to oscillators characterized by a monotone PRC. In the case of integrate-and-fire oscillators $\dot{x}=F(x)$, the results thereby apply to oscillators with a monotone vector field $F$ (including the popular LIF oscillators) and reinforce the numerical observations presented in Section \ref{subsec_asymp_simu}.

When $K\, Z'<0$ ($K\,dF/dx>0$), the oscillators converge to the asynchronous state.
\begin{theorem}
\label{theo_cont_1}
Consider a continuum of identical pulse-coupled integrate-and-fire oscillators $\dot{x}=F(x)$ characterized by (i) a monotone dynamics $K\,dF/dx>0$ and (ii) a PRC with a curvature of constant sign. Then, provided that the asynchronous state exits (cf. Corollary \ref{corol_stat_dens}), either the initial conditions do not fulfill \eqref{IC} and the flux tends to infinity in finite time, or the continuum exponentially converges to the asynchronous state.
\end{theorem}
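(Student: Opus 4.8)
The plan is to reduce the statement to Proposition~\ref{prop_dec} by translating the hypotheses on $F$ into the hypotheses on $Z$ required there. Differentiating the closed-form PRC \eqref{PRC}, $Z(\theta)=\omega/F(x(\theta))$, and using the state--phase relation \eqref{state_phase}, which gives $dx/d\theta=F(x)/\omega>0$, one obtains $Z'(\theta)=-F'(x(\theta))/F(x(\theta))$, hence
\begin{equation}
\nonumber
K\,Z'(\theta)=-\frac{K\,\big(dF/dx\big)\big(x(\theta)\big)}{F\big(x(\theta)\big)}<0\qquad\forall\theta\in[0,2\pi]\,,
\end{equation}
since $F>0$ and $K\,dF/dx>0$ by hypothesis~(i). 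Hypothesis~(ii) is precisely the assumption that $Z''$ has a constant sign, and the standing assumption that the asynchronous state exists is, by Corollary~\ref{corol_stat_dens}, guaranteed exactly when $K$ obeys \eqref{bound_K}; in particular the stationary density \eqref{stat_density} is well defined. Thus conditions (i)--(iii) of Proposition~\ref{prop_dec} all hold.

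Next I would split on whether the initial density satisfies the admissibility condition \eqref{IC}. If it does, Proposition~\ref{prop_dec} applies verbatim: the flux $J_0(t)$ stays uniformly bounded between two strictly positive constants, Theorem~\ref{theo_Lyap_evol} yields $\dot{\mathcal{V}}(\rho)\le -J^{\textrm{min}}\min_{\theta}|K\,Z'(\theta)|\,\mathcal{V}(\rho)\le 0$, and $\mathcal{V}(\rho)\to 0$ exponentially, i.e. $\rho(\cdot,t)\to\rho^*$, the asynchronous state. This is the second alternative of the theorem.

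If the initial density does \emph{not} satisfy \eqref{IC}, I would argue that the flux must blow up in finite time. Along any characteristic curve $\Lambda_\theta$ the density obeys the integrated total-derivative relation \eqref{charac_sol}, so as long as $J_0$ remains finite on $[0,\overline{t}_\theta]$ the value $\rho(2\pi,\overline{t}_\theta)$ stays strictly positive (because $\rho(\theta,0)>0$), whence $J_0(\overline{t}_\theta)>0$; therefore the \emph{only} way \eqref{IC} can fail is that $J_0$ reaches $+\infty$ at some finite time. By Proposition~\ref{prop_CI_dec}, failure of \eqref{IC} moreover forces $K\,Z(2\pi)>0$ (when $K\,Z(2\pi)\le 0$, \eqref{IC} always holds); since $K\,Z'<0$ this means $K\,Z(\theta)>0$ on all of $[0,2\pi]$, so the characteristic velocity $v=\omega+K\,Z(\theta)J_0\ge\omega$ stays bounded away from zero and the characteristics remain well defined and strictly increasing in $\theta$ until $J_0$ itself diverges. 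Following the finite-escape mechanism described after Proposition~\ref{prop_CI_dec}, $\rho(2\pi,t)$ reaches the critical value \eqref{cond_absorp} in finite time and, by the boundary condition \eqref{CL2}, $J_0(t)\to+\infty$ there. This is the first alternative.

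The routine translation of hypotheses and the convergent case are immediate once Proposition~\ref{prop_dec} is invoked; the part needing the most care is the blow-up alternative. The delicate point is to certify that ``\eqref{IC} fails'' cannot hide some pathology other than $J_0\to\infty$ --- one must rule out a characteristic curve ceasing to be defined for reasons unrelated to the flux, which is where the observation $K\,Z(\theta)>0$ (hence $v\ge\omega$) in this regime is essential --- and to confirm that the two cases $K\,Z(2\pi)\le 0$ and $K\,Z(2\pi)>0$ of Proposition~\ref{prop_CI_dec} genuinely exhaust the alternatives, so that the finite-escape-time statement following that proposition supplies exactly the missing rigorous content.
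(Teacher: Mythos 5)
Your proposal is correct and follows essentially the same route as the paper, which states Theorem~\ref{theo_cont_1} as a direct consequence of Proposition~\ref{prop_dec} after translating $K\,dF/dx>0$ into $K\,Z'<0$ via \eqref{PRC}--\eqref{state_phase}, with the blow-up alternative coming from the discussion of condition \eqref{IC} around Proposition~\ref{prop_CI_dec}. Your treatment of the case where \eqref{IC} fails (ruling out $J_0=0$ along characteristics and showing the only failure mode is finite-time divergence of the flux) is in fact slightly more explicit than the paper's own informal remark, but it is the same argument in substance.
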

When $K\, Z'>0$ ($K\,dF/dx<0$), the oscillators achieve synchrony.
\begin{theorem}
\label{theo_cont_2}
Consider a continuum of identical pulse-coupled integrate-and-fire oscillators $\dot{x}=F(x)$ characterized by (i) a monotone dynamics $K\,dF/dx<0$ and (ii) a PRC with a curvature of constant sign. Then, provided that the asynchronous state exits (cf. Corollary \ref{corol_stat_dens}), the continuum converges to the synchronous state in finite time.
\end{theorem}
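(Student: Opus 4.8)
The plan is to obtain Theorem \ref{theo_cont_2} as a direct consequence of Proposition \ref{prop_sync}, the only work being to check that the integrate-and-fire hypotheses (i)--(ii) translate into the three standing hypotheses of that proposition.

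First I would establish the sign equivalence $K\,dF/dx<0 \Leftrightarrow K\,Z'(\theta)>0$ on $[0,2\pi]$. From \eqref{state_phase} one has $dx/d\theta=F(x)/\omega$, and differentiating the closed form \eqref{PRC}, $Z(\theta)=\omega/F(x(\theta))$, gives $Z'(\theta)=-F'(x(\theta))/F(x(\theta))$. Since $F>0$ on $[\underline{x},\overline{x}]$, the sign of $K\,Z'$ is opposite to that of $K\,dF/dx$, so hypothesis (i) of the theorem is exactly hypothesis (ii) of Proposition \ref{prop_sync}. Hypothesis (ii) of the theorem, that the PRC has a curvature of constant sign, is by definition hypothesis (iii) of Proposition \ref{prop_sync} ($Z''\geq0$ on $[0,2\pi]$ or $Z''\leq0$ on $[0,2\pi]$). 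Hypothesis (i) of Proposition \ref{prop_sync}, namely existence of the stationary density \eqref{stat_density}, is precisely the clause \guillemets{provided that the asynchronous state exists}, for which Corollary \ref{corol_stat_dens} supplies the explicit coupling bound \eqref{bound_K}.

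With the hypotheses verified, I would apply Proposition \ref{prop_sync} to every strictly positive continuous solution with $\|\rho\|_{L^1}=1$, obtaining convergence to a synchronous state at some finite time $t_{\textrm{fin}}$. The last step is to recast the dichotomy of Proposition \ref{prop_sync} in integrate-and-fire language: since $F>0$ on $[\underline{x},\overline{x}]$, \eqref{PRC} forces $Z>0$ on $[0,2\pi]$, so $K\,Z(0)$ has the sign of $K$. Hence for an excitatory coupling ($K>0$, so $dF/dx<0$) the firing rate $J(0,t)$ becomes infinite at $t_{\textrm{fin}}$, while for an inhibitory coupling ($K<0$, so $dF/dx>0$) the velocity \eqref{dynamics} at $\theta=0$ vanishes and the density $\rho(0,t)$ becomes infinite at $t_{\textrm{fin}}$; in both cases the flux degenerates to a delta-like profile, i.e. the continuum synchronizes in finite time.

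I do not expect a genuine obstacle here: all the analytical substance already resides in Theorem \ref{theo_Lyap_evol}, Lemma \ref{lemma_TVD}, and Proposition \ref{prop_sync}. The only points demanding care are the bookkeeping of signs among $F$, $F'$, $Z$, $Z'$ and the coupling constant $K$, and ensuring the standing continuity and strict-positivity assumptions on the initial density $\rho(\cdot,0)$ are in force so that Proposition \ref{prop_sync} applies verbatim.
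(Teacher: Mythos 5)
Your proposal is correct and follows exactly the paper's route: Theorem \ref{theo_cont_2} is obtained as an immediate corollary of Proposition \ref{prop_sync}, using the relation $Z'(\theta)=-F'(x(\theta))/F(x(\theta))$ (so $K\,dF/dx<0 \Leftrightarrow K\,Z'>0$), identifying the curvature hypothesis with condition (iii), and invoking Corollary \ref{corol_stat_dens} for existence of the stationary density. Your sign bookkeeping and the recasting of the dichotomy ($K\,Z(0)\geq 0$ gives infinite flux, $K\,Z(0)<0$ gives infinite density at $\theta=0$) agree with the paper's discussion.
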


The curvature condition on the PRC is verified for the LIF oscillators (with dynamics $\dot{x}=S-\gamma\,x$). Indeed, the PRC is given by
\begin{equation*}
Z(\theta)=\frac{\omega}{S}\exp\left(\frac{\gamma\,\theta}{\omega}\right) \quad \textrm{with} \quad \omega=2\pi\, \gamma \left[\log\left(\frac{S}{S-\gamma}\right)\right]^{-1}
\end{equation*}
and satisfies $Z''>0$. Hence, Theorem \ref{theo_cont_1} and Theorem \ref{theo_cont_2} prove the global dichotomic behavior of the continuum of pulse-coupled LIF oscillators: the oscillators converge to the synchronous state when the coupling is excitatory ($K>0$) and to the asynchronous state when the coupling is inhibitory ($K<0$). These global results complement the local results on the continuum model presented in \cite{Abbott, Kuramoto3, Vreeswijk}.

The results of Theorem \ref{theo_cont_1} and Theorem \ref{theo_cont_2}, for infinite populations, are the exact analogs of the results presented in our previous study \cite{Mauroy} and in \cite{Mirollo}, for finite populations. They prove that the behaviors of monotone oscillators are dichotomic, not only for finite populations but also for infinite populations, thereby drawing a strong parallel between the analysis of finite populations and the analysis of infinite populations.

\subsection{Application to other oscillators} 

Beyond the case of integrate-and-fire oscillators, the results developed in the present paper apply to more general phase dynamics. The PRC can be (numerically) computed from general high-dimensional state models possessing a stable limit cycle, in which case the phase dynamics \eqref{phase_model} represents a one-dimensional reduced model valid in the neighborhood of the limit cycle \cite{Smeal, Winfree}. However, two limitations appear, which require (i) a weak coupling strength and (ii) a monotone PRC.

(i) Phase reduction of multidimensional models (other than integrate-and-fire models) is valid as long as the coupling is weak. Since the derivation of the transport equation relies on the phase dynamics, it is relevant only for a coupling strength $K\ll 1$.

(ii) The results only apply to strictly monotone PRC's. As a consequence of the monotonicity, the PRC must be characterized by $Z(0)\neq Z(2\pi)$ and cannot be periodic. However, a PRC computed on a limit cycle is periodic, unless the limit cycle is discontinuous. For instance, such a discontinuity is artificially created in the one-dimensional integrate-and-fire model (or in the Izhikhevich model \cite{Izi_book,Izhikevich2}). A discontinuous limit cycle can also be obtained for models with separated time scales (e.g. relaxation models such as FitzHugh-Nagumo oscillators \cite{FitzHugh,Nagumo} or Van der Pol oscillators, spiking oscillators). Since the time spent on a part of the limit cycle is negligible with respect to the time spent on the rest of the cycle, the fast part can be replaced in good approximation by a discontinuity. The oscillators are thereby characterized in good approximation by a non periodic PRC, that can be monotone.

This situation is illustrated in the following example, for oscillators characterized by a limit cycle close to a homoclinic bifurcation. 
\begin{example}In \cite{Brown}, limit cycles close to a homoclinic bifurcation are approximated by a discontinuous limit cycle, to which corresponds a monotone (discontinuous) PRC that satisfies the additional curvature condition, so that the results apply.

A homoclinic bifurcation occurs when there exists, for a given parameter value, a homoclinic orbit to a saddle point with real eigenvalues. At the bifurcation, a limit cycle appears. Assuming that there is a single unstable eigenvalue $\lambda_u$ such that $\lambda_u<|\lambda_{s_j}|$, with $\lambda_{s_j}$ being the stable eigenvalues, the limit cycle is stable \cite{Guckenheimer}. Since the trajectory is much slower near the saddle point, the limit cycle is discontinuous in good approximation 
and the PRC is approximatively given by the discontinuous function (see \cite{Brown} for more details)
\begin{equation}
\label{PRC_homoclinic}
Z(\theta)=C\,\omega \exp \left(\frac{2\pi\lambda_u}{\omega}\right)\exp\left(-\lambda_u\frac{\theta}{\omega}\right)\,,
\end{equation}
where $C>0$ is a model-dependent constant. 

The PRC \eqref{PRC_homoclinic} is monotone decreasing and has a positive curvature. It follows that the oscillators close to a homoclinic bifurcation and interacting through a weak excitatory impulsive coupling ($0<K\ll 1$) satisfy the hypothesis of Proposition \ref{prop_dec}: they exponentially converge toward the asynchronous state, provided that the asynchronous state exists. The condition \eqref{cond2_E0} of Proposition \ref{prop_E0} is always satisfied with a weak coupling, so that the asynchronous state always exists. For a weak inhibitory coupling \mbox{($-1\ll K<0$)}, Proposition \ref{prop_sync} implies that the oscillators reach a synchronous state in finite time.

In the popular Morris-Lecar model \cite{Morris-Lecar}, which is among the most widely used conductance-based models in computational neuroscience, a homoclinic bifurcation can occur for low external currents \cite{Rinzel,Bif_Morris_Lecar}. The above results apply in this situation.
\hfill $\diamond$
\end{example}

\section{Conclusion}
\label{conclusion}

In this paper, we have studied the global behavior of infinite populations of monotone pulse-coupled oscillators. The behavior of the oscillators is dichotomic: either the oscillators achieve perfect synchrony (synchronous state) or the oscillators uniformly spread over $S^1(0,2\pi)$ (asynchronous state).

The infinite population is represented by a continuous density. In this framework, a necessary and sufficient condition ensures existence and uniqueness of the stationary density (asynchronous state).

For the global convergence analysis of the (nonlinear) transport equation of the density, we propose a Lyapunov function that is the total variation distance between quantile density functions. The Lyapunov function has two extreme values that correspond to the two steady-state behaviors of the system (the synchronous state and the asynchronous state). The stability results obtained for general phase oscillators are applied to particular models of importance (e.g. leaky integrate-and-fire model).

The main result of the paper stresses the importance of a $L^1$ distance (the total variation distance) to analyze a transport PDE under monotonicity assumptions on the PRC. However, the time evolution of the proposed Lyapunov function is no longer monotone when the monotonicity assumptions fail, even though the observed dichotomic behavior seems more general. This restriction raises interesting open questions about the generalization of the Lyapunov function and about the use of $L^1$ distances for a larger class of transport PDE's.

\appendix

The Lyapunov function is written as
\begin{eqnarray}
\mathcal{V}= \int_0^1 |q-q^*| \,d\varphi && = \int_{\mathcal{A}^+} (q-q^*) \,d\varphi + \int_{\mathcal{A}^-} (q^*-q) \, d\varphi \nonumber \\
&& \leq \int_{\mathcal{A}^+} (q-q_{\textrm{min}}) \,d\varphi + \int_{\mathcal{A}^-} (q^*-q_{\textrm{min}}) \, d\varphi\,, \label{V_less}
\end{eqnarray}
with $\mathcal{A}^+=\{\varphi\in[0,1]|q-q^*\geq0\}$ and $\mathcal{A}^-=\{\varphi\in[0,1]|q-q^*<0\}$.

Moreover, one has
\begin{equation}
\label{less_2pi}
\int_{\mathcal{A}^+} q \,d\varphi + \int_{\mathcal{A}^-} q_{\textrm{min}} \,d\varphi \leq \int_0^1 q \, d\varphi  =Q(1)-Q(0)=2\pi\,,
\end{equation}
and
\begin{equation}
\label{less_2pibis}
\int_{\mathcal{A}^-} q^* \,d\varphi + \int_{\mathcal{A}^+} q_{\textrm{min}} \,d\varphi  \leq \int_0^1 q^* \, d\varphi  =Q^*(1)-Q^*(0)=2\pi\,.
\end{equation}
Next, injecting \eqref{less_2pi} and \eqref{less_2pibis} in inequality (\ref{V_less}) yields
\begin{equation}
\nonumber
\mathcal{V}\leq 2\pi - 2 \int_{\mathcal{A}^-} q_{\textrm{min}} \, d\varphi + 2\pi - 2 \int_{\mathcal{A}^+} q_{\textrm{min}} \, d\varphi =4\pi-2 \,q_{\textrm{min}}\,.
\end{equation}

\bibliographystyle{siam}

\end{document}